\numberwithin{equation}{section}
\theoremstyle{remark}
\newtheorem{remark}{Remark}
\newtheorem*{ack}{Acknowledgement}
\theoremstyle{plain}
\newtheorem{thm}{Theorem}[section]
\newtheorem{thmx}{Theorem}
\newtheorem{satz}{Satz}[section]
\newtheorem{lem}[satz]{Lemma} 
\theoremstyle{definition}
\newtheorem{defn}[satz]{Definition} 
\theoremstyle{remark}
\numberwithin{equation}{section}
\newcommand{\norm}[1]{\left\lVert#1\right\rVert}
\newcommand{\La}{\Lambda}
\newcommand{\C}{\mathbb{C}}				
\newcommand{\R}{\mathbb{R}}				
\newcommand{\N}{\mathbb{N}}				
\newcommand{\Z}{\mathbb{Z}}				
\newcommand{\ul}{\underline}					
\newcommand{\normz}[1]{\left\lVert#1\right\rVert_2}
\newcommand{\abs}[1]{\left\lvert#1 \right\rvert}
\newcommand{\gdw}{  \relax  \ifmmode \Longleftrightarrow  \else    $\Longleftrightarrow$  \fi}
\DeclareMathOperator*{\esssup}{ess\,sup}
\DeclareMathOperator*{\Comp}{\scalerel*{\bigcirc}{\bigcirc}}
\begin{document}
\title{On the differentiability of hairs for Zorich maps}
\author{Patrick Comd{\"u}hr}

\address{Mathematisches Seminar, Christian-Albrechts-Universit\"at zu Kiel, Ludewig-Meyn-Str. 4, D-24098 Kiel, Germany.}
\email{comduehr@math.uni-kiel.de}
\keywords{Exponential map, Zorich map, quasiregular map, complex dynamics, hair, external ray}

\subjclass[2010]{37F10 (primary), 30C65, 30D05 (secondary)}

\begin{abstract}
Devaney and Krych showed that for the exponential family $\lambda e^z$, where $0<\lambda <1/e$, the Julia set consists of uncountably many pairwise disjoint simple curves tending to $\infty$. Viana proved that these curves are smooth. In this article we consider a quasiregular counterpart of the exponential map, the so-called Zorich maps, and generalize Viana's result to these maps.
\end{abstract}

\maketitle
\section{Introduction and main result} \label{Intro}
 For an entire function $f$ the Julia set $J(f)$ of $f$ is the set of all points in $\C$ where the iterates $f^k$ of $f$ do not form a normal family in the sense of Montel. Given an attracting fixed point $\xi$ of $f$ we denote by $A(\xi):=\{z : \lim_{k\to\infty} f^k(z)=\xi \}$ the \textit{basin of attraction} of $\xi$. From the theory of complex dynamics it is well-known that $J(f)=\partial A(\xi)$, see \cite[Corollary 4.12]{Mi06}. For further information on complex dynamics we refer to \cite{Bea91,Ber93,Mi06,St93}.

Devaney and Krych \cite{DK84} showed that for $f(z)=\lambda e^z$, where $0<\lambda<1/e$, there exists an attracting fixed point $\xi \in \R$ such that $J(f)=\C\setminus A(\xi)$ and gave a detailed description of the structure of $J(f)$. We say that a subset $H$ of $\C$ (or $\R^d$) is a \textit{hair}, if there exists a homeomorphism $\gamma \colon [0,\infty) \to H$ such that $\lim_{t\to\infty} \gamma(t)=\infty$. We call $\gamma(0)$ the \textit{endpoint} of the hair.

We only state the part of the result due to Devaney and Krych which is relevant for us.
\begin{thmx} \label{ThmA}
	For $0<\lambda <1/e$ the set $J(\lambda e^z)$ is an uncountable union of pairwise disjoint hairs.
\end{thmx}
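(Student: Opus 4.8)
The plan is to describe $J(f)$, for $f(z)=\lambda e^z$ with $0<\lambda<1/e$, through symbolic dynamics over the horizontal strips $R_j=\{z:(2j-1)\pi<\Im z<(2j+1)\pi\}$, $j\in\Z$, and to produce the hairs as fixed points of a contracting inverse-iteration operator. First I would record the rigidity of the attracting basin. The real equation $\lambda e^x=x$ has two roots $\xi<q$ with $f'(\xi)=\xi<1<q=f'(q)$, so $\xi$ is attracting and $q$ repelling. The only asymptotic value $0$ satisfies $f^k(0)\to\xi$, so a neighbourhood of $0$ lies in $A(\xi)$; pulling back, $A(\xi)$ contains a left half-plane, hence $J(f)=\C\setminus A(\xi)\subseteq\{\Re z\ge -M\}$ for some $M>0$, and every orbit in $J(f)$ stays in this region. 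Moreover $f$ maps each line $\Im z=(2j+1)\pi$ into $(-\infty,0)$, whose image $(0,\lambda)$ lies in $(-\infty,q)$ and therefore converges to $\xi$; thus all strip boundaries lie in the Fatou set and $J(f)\subseteq\bigcup_j R_j$. Hence every $z\in J(f)$ has a well-defined \emph{itinerary} $\underline s(z)=(s_0,s_1,\dots)\in\Z^{\N}$ with $f^k(z)\in R_{s_k}$. (That $A(\xi)$ exhausts the Fatou set, which identifies $J(f)$ with $\C\setminus A(\xi)$, follows from the classification of periodic Fatou components and absence of wandering domains, $f$ having a single non-escaping singular value.)

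The core is the construction of the hairs. On $R_j$ the map $f$ is a bijection onto $\C\setminus(-\infty,0]$ with inverse $L_j(w)=\log(w/\lambda)+2\pi ij$ (principal logarithm), so $\Re L_j(w)=\log|w|+\log(1/\lambda)>\log|w|+1$ and $|L_j'(w)|=1/|w|$: the branches push points far to the right and contract strongly there. Let $\sigma$ be the shift on itineraries, set $F(t)=e^t-1$, and call $\underline s$ \emph{admissible} if $2\pi|s_k|\le F^k(x)$ for some $x>0$ and all $k$ — every bounded itinerary, in particular every element of $\{0,1\}^{\N}$, is admissible. For admissible $\underline s$ I would look for $g_{\underline s}\colon[t_{\underline s},\infty)\to\C$ solving $g_{\underline s}(t)=L_{s_0}\bigl(g_{\sigma\underline s}(F(t))\bigr)$, obtained as the limit of the iterates of the operator $g\mapsto\bigl(t\mapsto L_{s_0}(g_{\sigma\underline s}(F(t)))\bigr)$ on a space of itinerary-indexed continuous curves with $g_{\underline s}(t)-t$ bounded, started from $g^{(0)}_{\underline s}(t)=t$. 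Convergence is a contraction argument: the chain-rule factors $|L'_{s_k}|=1/|w_k|$ are small because the super-exponential growth of $F$ forces every intermediate real part to be large, and admissibility is precisely what lets the recursion start at a finite potential $t_{\underline s}$. From the construction one reads off that $g_{\underline s}$ is continuous with $\Re g_{\underline s}(t)=t+O(1)\to\infty$, is injective (the potential $t$ is recovered from the orbit, $F^k$ being strongly expanding) and proper; hence $H_{\underline s}:=g_{\underline s}([t_{\underline s},\infty))$ is a hair. By construction $f(H_{\underline s})\subseteq H_{\sigma\underline s}$; every point of $H_{\underline s}$ lies in the open strip $R_{s_0}$ and so has itinerary $\underline s$, and $H_{\underline s}\subseteq J(f)$ — its open part because those points escape to $\infty$, its endpoint because $J(f)$ is closed.

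Conversely, given $z\in J(f)=\C\setminus A(\xi)$ with itinerary $\underline s$, the bounds $\Re f^k(z)\ge -M$ and $|f^{k+1}(z)|=\lambda e^{\Re f^k(z)}$ force $2\pi|s_k|$ below a $k$-fold iterated exponential of $\Re z$, so $\underline s$ is admissible; and the expansion estimate above, applied to the prescribed itinerary, shows that the set of points of $J(f)$ with itinerary $\underline s$ is exactly $H_{\underline s}$ (escaping points on the open part; the endpoint the unique non-escaping point, e.g. $q$ when $\underline s=000\dots$). Thus $J(f)=\bigcup_{\underline s\ \mathrm{admissible}}H_{\underline s}$, the union is disjoint since $H_{\underline s}\subseteq\{z:f^k(z)\in R_{s_k}\ \forall k\}$ and distinct itineraries give disjoint such sets, and the domain of the injective map $\underline s\mapsto H_{\underline s}$ contains the uncountable set $\{0,1\}^{\N}$ — so $J(f)$ is an uncountable union of pairwise disjoint hairs.

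The main difficulty is the construction step: choosing the right space of itinerary-indexed curves with a weighted norm that makes the inverse-iteration operator a contraction uniformly in $\underline s$, pinning down the minimal potential $t_{\underline s}$, and establishing injectivity and properness of $g_{\underline s}$. A secondary subtlety is the exhaustion argument — that a point of $J(f)$ with a prescribed itinerary cannot miss the corresponding curve; this rests on the hyperbolicity of $f$ (its singular orbit being compactly contained in the Fatou set), which yields genuine expansion along orbits in $J(f)$.
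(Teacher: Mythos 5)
Your construction follows essentially the same route the paper takes (for the Zorich analogue, and which it cites from [DK84], [SZ03] for the exponential case): itinerary coding over the fundamental strips, admissibility expressed through the iterated exponential $E(t)=e^t-1$, and the hair realised as the limit of pullbacks of the real ray under inverse branches. Your fixed-point formulation $g_{\underline s}(t)=L_{s_0}\bigl(g_{\sigma\underline s}(F(t))\bigr)$, iterated from $g^{(0)}(t)=t$, produces exactly the sequence $g_k(t)=(L_{s_0}\circ\dots\circ L_{s_k})(E^{k+1}(t))$ the paper writes down, so the two presentations differ only cosmetically (contraction on a curve space versus direct convergence of $g_k$).

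One claim in your exhaustion step is wrong and should be removed or reformulated. You assert that the endpoint of each hair is ``the unique non-escaping point'' on it. For the itinerary $0^\infty$ this happens to be the repelling fixed point $q$, but in general endpoints of hairs do escape; indeed, by Karpi\'nska's theorem (Theorem~\ref{ThmC} in the paper, and the analogous statement in Theorem~\ref{bergweiler1}) the set of endpoints has full Hausdorff dimension, and a large part of it lies in the escaping set. What you actually need for the exhaustion argument is not ``non-escaping versus escaping'' but the expansion estimate you allude to: if $z$ and $z'$ share the admissible itinerary $\underline s$, the inverse branches along that itinerary contract, forcing $z$ and $z'$ onto the same curve; the endpoint is then singled out as the unique point whose orbit grows more slowly than that of any interior point of the hair (i.e.\ at ``potential'' $t_{\underline s}$), not by whether it escapes at all. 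With that correction the argument is sound and matches the paper's framework.
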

For a set $X$ in $\C$ (or in $\R^d$) we denote by $\operatorname{dim} X$ the \textit{Hausdorff dimension} of $X$. The following result is due to McMullen \cite{McM87}.

\begin{thmx} \label{ThmB}
	Let $\lambda\in \C\setminus\{0\}$. Then $\operatorname{dim} J(\lambda e^z)=2$.
\end{thmx}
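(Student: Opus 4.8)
Write $f(z)=\lambda e^{z}$. Since $J(f)\subseteq\C$, the bound $\dim J(f)\le2$ is trivial, so the whole point is the lower bound. As is classical, for exponential maps the escaping set $\{z:f^{n}(z)\to\infty\}$ is contained in $J(f)$, so it suffices to produce a subset of the escaping set of Hausdorff dimension $2$. I would do this by constructing a nested Cantor-type family and applying the density criterion for Hausdorff dimension that McMullen designed for exactly this situation.

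\emph{Construction.} Choose $c_{0}$ so that its forward orbit escapes to $+\infty$ essentially along a horizontal line, and set $c_{n}=f^{n}(c_{0})$; then $\operatorname{Re}c_{n}\to+\infty$ at roughly iterated-exponential speed and $|c_{n+1}|\asymp|\lambda|e^{\operatorname{Re}c_{n}}$. Around $c_{n}$ put a Jordan domain $A_{n}$ that is almost as large as it can be without enclosing the origin --- for instance a disk $D(c_{n},|c_{n}|-r_{n})$ with $r_{n}\to\infty$ slowly --- arranged so that $f(A_{n-1})\supseteq A_{n}$. Because $A_{n}$ is simply connected and omits $0$, its preimage $f^{-1}(A_{n})$ has exactly one component in each $2\pi i$-period strip of $f$, and $f^{n}$ is univalent on every component of $f^{-n}(A_{n})$. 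Let $\Lambda=\bigcap_{n\ge0}\{z:f^{j}(z)\in A_{j}\text{ for }0\le j\le n\}$. Every $z\in\Lambda$ escapes, because $|f^{n}(z)|\ge r_{n}\to\infty$, so $\Lambda\subseteq J(f)$; and $\Lambda$ is the limit of a nested sequence of finite unions of ``blocks'', where a block of level $n$ is a component of $f^{-n}(A_{n})$ (restricted to the family) and each block of level $n-1$ carries $\asymp|c_{n-1}|$ blocks of level $n$.

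\emph{Dimension estimate.} The point of making $A_{n}$ almost extremal is that each component $D$ of $f^{-1}(A_{n})$ lying in $A_{n-1}$ is then a long cuspidal strip of width about $\pi$ and of length comparable to $\operatorname{Re}c_{n-1}$, hence comparable to the diameter of $A_{n-1}$. Transporting this picture to the blocks by the univalent branches of $f^{-n}$ --- and, where needed, subdividing the cuspidal pieces into roughly square ones so that the block diameters genuinely decrease --- one arrives at a nested family in which the descendants of every block fill a density of that block bounded below by a single positive constant $\Delta$, while the geometry contracts from one level to the next by a factor tending to $0$. McMullen's criterion then bounds $\dim\Lambda$ from below by $2$ minus a $\limsup$ of the shape ``(cumulative $\log$ of the density loss)/($\log$ of the block diameters)''; since the density loss per level is bounded while the block diameters shrink faster than geometrically, this $\limsup$ is $0$. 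Hence $\dim\Lambda\ge2$, and therefore $\dim J(\lambda e^{z})=2$.

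\emph{Main difficulty.} The delicate part is not any single estimate but the need to resolve several tensions simultaneously. The orbit $(c_{n})$ must escape fast enough that $f(A_{n-1})\supseteq A_{n}$ while the domains $A_{n}$ remain almost extremal, since it is exactly this extremality that forces the preimage components to be long enough to fill a definite density of their parents; and at the same time the distortion of the branches of $f^{-n}$ on the blocks --- which is jeopardised precisely by that cuspidality --- together with the rate at which the block diameters shrink, must be controlled well enough for McMullen's criterion to apply with a vanishing error term. Pushing this bookkeeping through is the substance of the argument.
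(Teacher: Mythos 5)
Theorem~B is stated in the paper without proof; it is a direct citation to McMullen's paper on area and Hausdorff dimension of Julia sets of entire functions, so there is no argument of the paper's own against which to compare yours. On its own merits, your sketch is a reasonable outline of McMullen's strategy: pass to the escaping set (which is contained in $J(f)$ since $\lambda e^z$ lies in the Eremenko--Lyubich class), build a nested family of compact pieces along a fast-escaping orbit, subdivide the long thin cuspidal preimage strips into roughly square blocks so as to retain uniform geometry, and then invoke McMullen's density criterion
\[
\dim \bigcap_n A_n \ \ge\ 2 - \limsup_{n\to\infty} \frac{\sum_{j\le n}|\log\Delta_j|}{|\log d_n|}.
\]
The tension you flag as the ``main difficulty'' is accurate: the domains $A_n$ must be nearly extremal so that the preimage strips span essentially the full radial extent of $A_{n-1}$ and therefore fill a definite density, yet they must still satisfy $f(A_{n-1})\supseteq A_n$, and the branches of $f^{-n}$ must have bounded distortion on the blocks (a Koebe-type estimate, using that $0\notin A_n$) so that the density bound transports down to level $n$. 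If you wished to turn this into a complete proof, the concrete missing pieces are: an explicit choice of the radii $r_n$ consistent both with $f(A_{n-1})\supseteq A_n$ and with $r_n/|c_n|\to 0$; an explicit computation of the density, which is of order $\Re c_{n-1}/|c_{n-1}|$ and hence bounded below once the orbit is chosen nearly real, together with a check that it survives the subdivision of the strips; and a verification that $d_n$ decays fast enough to make the error term in McMullen's bound vanish. But since the paper itself supplies no proof of Theorem~B, your account is already more than the paper provides, and it captures the essential architecture of McMullen's argument.
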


McMullen's result implies that in the situation of Theorem \ref{ThmA} the union of the hairs has Hausdorff dimension 2. The following result of Karpi\'nska \cite{Ka99} is also known as Karpi\'nska's paradox, see e.g. \cite{SZ03}.
\begin{thmx} \label{ThmC}
	Let $0<\lambda < 1/e$ and let $\mathcal{C}$ be the set of endpoints of the hairs that form $J(\lambda e^z)$. Then $\operatorname{dim} \mathcal{C} = 2$ and $\operatorname{dim} (J(\lambda e^z)\setminus \mathcal{C}) = 1$.
\end{thmx}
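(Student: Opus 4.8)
The plan is to deduce Theorem \ref{ThmC} from the single statement $\dim(J\setminus\mathcal C)=1$, where I write $f(z)=\lambda e^z$ and $J=J(f)$. Indeed $J$ is the disjoint union of $\mathcal C$ and $J\setminus\mathcal C$, and since the Hausdorff dimension of a union is the supremum of the dimensions of the pieces, Theorem \ref{ThmB} gives $2=\dim J=\max\{\dim\mathcal C,\dim(J\setminus\mathcal C)\}$; once $\dim(J\setminus\mathcal C)=1$ is known, this forces $\dim\mathcal C=2$. The lower bound is immediate: $J\setminus\mathcal C$ contains a hair with its endpoint deleted, hence a non-degenerate connected set, and an orthogonal projection onto a suitable line maps this onto a non-degenerate interval; being $1$-Lipschitz it cannot increase Hausdorff dimension, so $\dim(J\setminus\mathcal C)\ge 1$.

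For the upper bound I would use the explicit structure of the hairs from \cite{DK84} (see also \cite{SZ03}). The hairs are parametrised by their external addresses $\underline s=s_0s_1s_2\cdots\in\Z^{\N}$; for each admissible address the hair $g_{\underline s}$ is a curve defined on a half-line $[t_{\underline s},\infty)$ of ``potentials'', with $\Re g_{\underline s}(t)-t$ bounded uniformly in $\underline s$ and $\Im g_{\underline s}(t)\to 2\pi s_0$, with $g_{\underline s}(t_{\underline s})$ its endpoint, and with $f\circ g_{\underline s}=g_{\sigma\underline s}\circ F$, where $\sigma$ is the shift and $F$ is an expanding map satisfying $F(t)\sim\lambda e^{t}$. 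Admissibility of $\underline s$ forces $|s_j|\lesssim\exp\!\big(F^{j-1}(t_{\underline s})\big)$, and two hairs whose addresses first differ at depth $k$ are, at potential $t$, at transverse distance at least of order $\exp\!\big(-F^{k-1}(t)\big)$. The key observation is that a point on a hair other than its endpoint has potential $t>t_{\underline s}$, and because $F$ is expanding such a point is eventually carried by the dynamics to potential $\ge F\big(t_{\sigma^{n}\underline s}\big)=F^{n+1}(t_{\underline s})$; that is, writing $S:=\{\,g_{\underline s}(t):t\ge F(t_{\underline s})\,\}$, one has $J\setminus\mathcal C\subseteq\bigcup_{n\ge 0}f^{-n}(S)$. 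Since $f$ has no critical points, every branch of $f^{-n}$ is conformal and hence dimension-preserving on bounded sets, and Hausdorff dimension is countably stable, so it suffices to prove $\dim S=1$.

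To bound $\dim S$ I would cover it region by region. Fix a bounded region meeting $\{\Re w\approx T\}$; a point of $S$ there lies on a hair with $\Re g_{\underline s}(t)\approx t\approx T$ and $t\ge F(t_{\underline s})$, so its tip-potential satisfies $t_{\underline s}\le F^{-1}(T)\approx\log T$. By admissibility its entries then obey $|s_j|\lesssim\exp\!\big(F^{j-1}(\log T)\big)$, so the orbit of the hair traced forward from its tip has only that many choices at each step — this is the mechanism by which $S$ is thin. Now at scale $\delta$ two such hairs are distinguishable only if they differ at some depth $k$ with $F^{k-1}(T)\lesssim\log(1/\delta)$, and since $F$ iterates like an exponential tower this bounds $k$ by $O\big(\log^{*}\log(1/\delta)\big)$; the number of admissible prefixes of that length which actually occur is at most $\prod_{j\le k}\exp\!\big(F^{j-1}(\log T)\big)=\exp\!\big(\log T+O(\log\log(1/\delta))\big)$, which for fixed $T$ is $O(\log(1/\delta))$. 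Each such ``class'' is a bundle of hairs of transverse size $\le\delta$ and bounded longitudinal extent, hence is covered by $\asymp\delta^{-1}$ squares of side $\delta$, so the whole piece of $S$ is covered by $O\big(\log(1/\delta)\cdot\delta^{-1}\big)$ of them. This gives box dimension $1$; summing over the countably many regions yields $\dim S=1$, hence $\dim(J\setminus\mathcal C)=1$ and Theorem \ref{ThmC}.

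The main obstacle is turning this picture into rigorous estimates: one must replace the asymptotic statements (that $\Re g_{\underline s}(t)\approx t$, the admissibility bound on $|s_j|$, and the $\exp(-F^{k-1}(t))$ transverse separation) by explicit inequalities holding uniformly over all external addresses, i.e.\ one needs quantitative control of the Cantor-bouquet geometry, which is the technical core of \cite{Ka99}. A second delicate point is the reduction $J\setminus\mathcal C\subseteq\bigcup_{n}f^{-n}(S)$: one has to check that the strict inequality $t>t_{\underline s}$ really does propagate under $F$ to $F^{n}(t)\ge F^{n+1}(t_{\underline s})$ for some $n$ — equivalently that the gap $F^{n}(t)-F^{n}(t_{\underline s})$ eventually overtakes $F^{n+1}(t_{\underline s})-F^{n}(t_{\underline s})$ — which uses that $F$ is uniformly expanding on the relevant range and needs a little care about where tips lie relative to the repelling fixed point of $F$. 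Once these are in place the conclusion follows as sketched; conceptually, the two-dimensional part of $J$ sits in the ``tip clusters'' — accumulations of endpoints of addresses with large entries at many depths — which are precisely what $S$, and hence $J\setminus\mathcal C$, avoids, consistently with $\dim\mathcal C=2$.
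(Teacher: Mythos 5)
The paper does not actually prove Theorem~\ref{ThmC}; it quotes it from \cite{Ka99}, and the mechanism it alludes to in Section~\ref{Remarks} is Karpi\'nska's: non-endpoints escape inside a thin parabolic-type domain, and a dynamical covering of that domain forces dimension $\le 1$. Your sketch takes a genuinely different, symbolic-counting route. The easy reductions are fine: deducing $\dim\mathcal{C}=2$ from Theorem~\ref{ThmB} and $\dim(J\setminus\mathcal{C})=1$ via countable stability, and the lower bound $\dim(J\setminus\mathcal{C})\ge1$ via a Lipschitz projection of a punctured hair. But the central reduction contains a concrete error. With $S=\{g_{\underline s}(t):t\ge F(t_{\underline s})\}$ (your $F$ being the paper's reference map $E$) you claim $J\setminus\mathcal{C}\subseteq\bigcup_{n\ge0}f^{-n}(S)$ because the gap $F^n(t)-F^n(t_{\underline s})$ eventually overtakes $F^{n+1}(t_{\underline s})-F^n(t_{\underline s})$. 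That inequality reduces to $F^n(t)\ge F^{n}(F(t_{\underline s}))$, and since $F^n$ is strictly increasing it holds for some $n$ if and only if it already holds at $n=0$, i.e.\ iff $t\ge F(t_{\underline s})$. A point with $t_{\underline s}<t<F(t_{\underline s})$ therefore never enters $S$ under forward iteration, and the inclusion fails.

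The natural repair is to set $S_m=\{g_{\underline s}(t):t\ge t_{\underline s}+1/m\}$ and use $J\setminus\mathcal{C}=\bigcup_m S_m$ with countable stability, but this destroys the strong tip-potential bound $t_{\underline s}\lesssim\log T$ that drives your prefix count: you only get $t_{\underline s}\le T-1/m$, hence $|s_j|\lesssim\exp(F^{j-1}(T-1/m))$ rather than $\exp(F^{j-1}(\log T))$, and even this bound holds only for $j$ beyond an index depending on the hair, because admissibility controls a $\limsup$; the count must therefore be stratified over that index too. A separate gap is that the covering requires an \emph{upper} bound on the diameter of the bundle of hairs sharing a length-$k$ prefix, whereas what you quote is a \emph{lower} separation bound; the diameter estimate is the delicate one, since the contraction $\prod_j\|DL_j\|\approx\exp(-F^{k-2}(T))$ is applied to images whose transverse extent is itself of order $F^k(T)$. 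These are precisely the points you flag as "quantitative control of the Cantor-bouquet geometry", but they are not peripheral — without them the covering count does not close. Karpi\'nska's parabolic-domain argument sidesteps the whole issue by covering along the orbit rather than resolving external addresses at a fixed scale, which is why it is the standard route.
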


The existence of hairs is not restricted to the situation considered by Devaney and Krych. Hairs appear for $\lambda e^z$ for all $\lambda \in \C\setminus\{0\}$ (see \cite{DGH86,SZ03}) and also for more general classes of functions (see \cite{Ba07,DT86,RRRS11}). 

For further information on dynamics of exponential functions we refer to papers by Rempe \cite{Re06} and Schleicher \cite{Sch03}.

Viana \cite{Vi88} investigated the differentiability of hairs for exponential maps. 

\begin{thmx} \label{ThmD}
For all $\lambda\in \C\setminus\{0\}$ the hairs of $\lambda e^z$ are $C^\infty$-smooth.
\end{thmx}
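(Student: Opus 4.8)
The plan is to exploit the structure behind Theorem~\ref{ThmA}: the hairs are assembled from inverse branches of $\lambda e^{z}$ which are \emph{holomorphic} and \emph{strongly contracting}, and a curve built out of such branches should inherit their smoothness. Fix $\lambda\in\C\setminus\{0\}$. By the Devaney--Krych construction and its extension in \cite{DGH86,SZ03}, the hairs are indexed by exponentially bounded external addresses $\underline{s}=s_0s_1s_2\cdots\in\Z^{\N_0}$: to each such $\underline{s}$ there correspond a hair $H_{\underline{s}}$ and a homeomorphic ``potential'' parametrization $\gamma_{\underline{s}}\colon[t_{\underline{s}},\infty)\to H_{\underline{s}}$ with $\Re\gamma_{\underline{s}}(t)\to\infty$ as $t\to\infty$, satisfying
\[
  \lambda e^{\gamma_{\underline{s}}(t)}=\gamma_{\sigma\underline{s}}\!\big(\mathcal E_{\underline{s}}(t)\big),
  \qquad\text{equivalently}\qquad
  \gamma_{\underline{s}}(t)=\ell_{s_0}\!\big(\gamma_{\sigma\underline{s}}(\mathcal E_{\underline{s}}(t))\big),
\]
where $\sigma$ is the shift, $\ell_k(w)=\operatorname{Log}(w/\lambda)+2\pi i k$ is the branch of $(\lambda e^{z})^{-1}$ into the strip $\{|\Im z-2\pi k|<\pi\}$, and $\mathcal E_{\underline{s}}$ is an explicit increasing real-analytic reparametrization with $\mathcal E_{\underline{s}}(t)=e^{t}+O(1)$ as $t\to\infty$. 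First I would collect the quantitative input this supplies: for large $t$ one has $\Re\gamma_{\underline{s}}(t)\asymp t$, hence $\big|\lambda e^{\gamma_{\underline{s}}(t)}\big|\asymp e^{t}\asymp\mathcal E_{\underline{s}}'(t)$; the logarithmic branches satisfy $\big|\ell_k^{(p)}(w)\big|=(p-1)!\,|w|^{-p}$; and iterating the functional equation $n$ times forces the intermediate points to escape at iterated-exponential speed.

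Iterating gives $\gamma_{\underline{s}}(t)=(\ell_{s_0}\circ\cdots\circ\ell_{s_{n-1}})\big(\gamma_{\sigma^{n}\underline{s}}(\mathcal E_{\underline{s}}^{[n]}(t))\big)$ with $\mathcal E_{\underline{s}}^{[n]}:=\mathcal E_{\sigma^{n-1}\underline{s}}\circ\cdots\circ\mathcal E_{\underline{s}}$. I would replace the unknown deep hair by a straight horizontal ray and consider the approximants
\[
  \gamma_{\underline{s}}^{[n]}(t):=\big(\ell_{s_0}\circ\cdots\circ\ell_{s_{n-1}}\big)\!\big(\mathcal E_{\underline{s}}^{[n]}(t)+2\pi i\,s_n\big),
\]
which are $C^{\infty}$, being holomorphic maps composed with a smooth curve, and which converge to $\gamma_{\underline{s}}$ locally uniformly -- this $C^{0}$ convergence is precisely the contraction estimate underlying the existence of the hairs, since $(\ell_{s_0}\circ\cdots\circ\ell_{s_{n-1}})'=\prod_j 1/z_j$ with all $|z_j|$ large. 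The crux is to upgrade this to convergence in $C^{k}_{\mathrm{loc}}$ on $(t_{\underline{s}},\infty)$ for every $k$. Differentiating $\gamma_{\underline{s}}^{[n]}$ $k$ times by Fa\`a di Bruno writes $\tfrac{d^{k}}{dt^{k}}\gamma_{\underline{s}}^{[n]}(t)$ as a universal polynomial in the $\ell_{s_j}^{(p)}$ evaluated at the intermediate points and in the $\tfrac{d^{q}}{dt^{q}}\mathcal E_{\underline{s}}^{[n]}(t)$; by the facts above every term carries a factor $\prod_j|z_j|^{-p_j}$ that decays \emph{iterated}-exponentially in $n$, which overwhelms the merely single-exponential growth of the reparametrization derivatives. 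Hence the telescoping differences $\gamma_{\underline{s}}^{[n+1]}-\gamma_{\underline{s}}^{[n]}$ are summable in each $C^{k}_{\mathrm{loc}}$, the sequence is Cauchy there, and a sequence of $C^{k}$ maps converging in $C^{k}$ has a $C^{k}$ limit; so $\gamma_{\underline{s}}\in C^{k}$ on $(t_{\underline{s}},\infty)$ for all $k$. (Equivalently one can induct on $k$ directly in the functional equation, in the weighted norm $\sup_t e^{t}\|\gamma_{\underline{s}}^{(k)}(t)\|$ that encodes the decay of the higher derivatives of a hair toward $\infty$: the unique term containing $\gamma_{\sigma\underline{s}}^{(k)}$ then carries coefficient $\asymp e^{kt}e^{-\mathcal E_{\underline{s}}(t)}<1$ for large $t$.) Finally, smoothness first obtained on a half-line $(T,\infty)$ with $T$ large is propagated down to all of $(t_{\underline{s}},\infty)$ by finitely many applications of the functional equation, since any fixed compact sub-arc of the hair is, after peeling off finitely many $\ell_{s_j}$ and $\mathcal E_{\sigma^j\underline{s}}$, the image under smooth maps of a piece of $\gamma_{\sigma^{N}\underline{s}}$ lying in the region already handled.

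The hard part is the competition in the Fa\`a di Bruno expansion between the iterated-exponential contraction contributed by the logarithmic branches and the genuine exponential growth of the derivatives of $\mathcal E_{\underline{s}}^{[n]}$: the telescoping (or the weighted norm) must be arranged so that the net contribution is summable and, crucially, uniform in $\underline{s}$, which is where exponential boundedness of the address and the comparison $\Re\gamma_{\underline{s}}(t)\asymp t$ are used. The second delicate point is the endpoint of the hair, where the orbit need not escape and the contraction is only mild: for $0<\lambda<1/e$ this follows cleanly from hyperbolicity, $\lambda e^{z}$ being uniformly expanding on its Julia set in the cylinder metric so that \emph{every} inverse branch along the hair contracts by a definite factor; for general $\lambda$ one combines the $C^{k}$ estimates for iterates with the fact that points just beyond the endpoint do escape. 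The remaining ingredients -- admissibility of addresses, the comparison $\Re\gamma_{\underline{s}}(t)\asymp t$, and the asymptotics of $\mathcal E_{\underline{s}}$ -- are standard in the Devaney--Krych/Schleicher--Zimmer framework and introduce no new idea.
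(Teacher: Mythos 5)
The paper does not prove Theorem~\ref{ThmD}; it cites it from Viana \cite{Vi88}. The paper's own contribution is the Zorich-map analog, Theorem~\ref{THM}, proved via Theorem~\ref{thm1}, and that proof follows precisely the template you sketch: approximants $g_k$ built from composed inverse branches of the dynamics, telescoping differences $g_k'-g_{k-1}'$, and the observation that the contraction $\prod_l \norm{D\Lambda(\cdot)}\lesssim \prod_l E^l(t)^{-1}$ dominates the iterated-exponential growth $(E^{k+1})'(t)=\prod_{j=1}^{k}(E^j(t)+1)$, with a H\"older/regularity estimate on $D\Lambda$ controlling the remainder. So the route you propose is essentially the one the paper (and Viana) takes. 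The paper carries out only the $C^1$ case for Zorich maps and remarks that higher derivatives should follow by ``similar methods, but in a more technical way''; your Fa\`a di Bruno outline is exactly that extension, and in the holomorphic setting the explicit formula $\ell^{(p)}(w)=(-1)^{p-1}(p-1)!\,w^{-p}$ makes the higher-order book-keeping essentially no harder than the first-order step, which is why Viana could go all the way to $C^\infty$ while the quasiregular paper stops at $C^1$.

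One claim in your write-up is, however, wrong. Viana's result, like Theorem~\ref{THM} here, is explicitly \emph{not} about the endpoint: the paper states ``As in Viana's result, we do not obtain differentiability of the hairs in the endpoints since they can spiral around a point.'' Hyperbolicity for $0<\lambda<1/e$ does give uniform contraction of inverse branches in the cylinder metric, and hence $C^0$ convergence up to the endpoint, but $C^1$ can genuinely fail there: the argument of the cumulative derivative $\prod_j z_j^{-1}$ need not converge as the parameter decreases to $t_{\underline{s}}$, so the hair can spiral into its endpoint with no tangent direction. Your sentence asserting that the endpoint case ``follows cleanly from hyperbolicity'' and the hand-waving for general $\lambda$ should be dropped; the correct and provable conclusion is $C^\infty$-smoothness on the open ray $(t_{\underline{s}},\infty)$, which is what the rest of your argument actually delivers.
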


In this paper we consider a higher-dimensional analog of exponential maps, the so-called Zorich maps. These maps are quasiregular maps, which can be considered as a higher-dimensional analog of holomorphic maps. Since we will not use any results about quasiregular maps, we do not give the definition here, but refer to Rickman's monograph \cite{Ri93}. We note, however, that the quasiregularity is an underlying idea in many of the arguments.

 Every Zorich map depends on a bi-Lipschitz map which maps a square to some body, for example to a hemisphere or to the faces of a tetrahedron (see \cite[page 121]{IM01}). Recently, Nicks and Sixsmith \cite{NS17} considered a bi-Lipschitz map which maps a square to the faces of a square based pyramid to obtain a quasiregular map $f\colon \R^3 \to\R^3$ of transcendental type, which has a periodic domain where all iterates of $f$ tend locally uniformly to $\infty$. In this paper we restrict ourself to the `standard` case, i.e. our bi-Lipschitz maps always map a square to the upper or lower hemisphere. 
 
 For the definition of these maps we follow \cite[page 119]{IM01}. Subsequently we summarize some results of Bergweiler \cite{Ber10}, but we replace the dimension 3 by any dimension $d\geq 3$. 
 
We define for $d\in\N$ with $d\geq 3$ the hypercube
\begin{equation}
Q:=\{x\in \R^{d-1} : \norm{x}_{\infty}\leq 1\} = [-1,1]^{d-1}, 
\end{equation}
the upper hemisphere
\begin{equation}
\mathbb{S}_+ := \{x\in\R^d : \norm{x}_2 = 1 \text{ and } x_d\geq 0\} 
\end{equation}
and for $c\in \R$ the half-space
\begin{equation}
\mathbb{H}_{\geq c} := \{x\in \R^d : x_d \geq c\}.
\end{equation}
The half-spaces $\mathbb{H}_{>c}$, $\mathbb{H}_{<c}$ and $\mathbb{H}_{\leq c}$ are defined analogously.
For a bi-Lipschitz map $h\colon Q \to \mathbb{S}_+$ we define 
\begin{equation}
F\colon Q\times \R \to \mathbb{H}_{\geq 0},  \, F(x)=e^{x_d}\,h(x_1,\dots ,x_{d-1}).
\end{equation}
By reflection we get a function $F \colon \R^d \to \R^d$ which we call a \textit{Zorich map}.

If $DF(x_1,\dots,x_{d-1},0)$ exists, we have 
\begin{equation}
DF(x_1,\dots,x_{d-1},x_d)=e^{x_d}\,DF(x_1,\dots,x_{d-1},0) \label{eq:DF(0)}
\end{equation} 
and thus there exist $\alpha ,m, M \in \R$, $\alpha \in (0,1)$, $m<M$, $M\geq 1$ such that 
\begin{align}
\norm{DF(x)} &\leq \alpha \quad \, \text{    a.e. for } x_d\leq m \label{eq:alpha}  \intertext{and} 
l(DF(x))&\geq \frac{1}{\alpha} \quad \text{   a.e. for } x_d \geq M, \label{eq:lDF}
\end{align}
where $DF(x)$ denotes the derivative,
\begin{equation}
\norm{DF(x)}=\sup\limits_{\normz{h}=1}\normz{DF(x)h}
\end{equation}
the operator norm of $DF(x)$ and 
\begin{equation}
l(DF(x))=\inf\limits_{\normz{h}=1}\normz{DF(x)h}.
\end{equation}
Consider now for $a\in \R$ such that 
\begin{equation}
a\geq e^M - m \label{eq:a} %
\end{equation}
the map
\begin{equation}
f_a \colon \R^d \to \R^d,\, f_a(x)=F(x)-(0,\dots,0,a). 
\end{equation}
In our context we call $f_a$ a Zorich map, too. 
The following theorem is due to Bergweiler \cite{Ber10}. 
\begin{thmx} \label{bergweiler1}
	Let $f_a$ be as above with $a$ as in \eqref{eq:a}. Then there exists a unique fixed point $\xi=(\xi_1,\dots,\xi_{d})$ satisfying $\xi_d \leq m$, the set
	\begin{equation}
	J:=\{x\in \R^d \colon f_a^k(x) \nrightarrow \xi\}    
	\end{equation}
	consists of uncountably many pairwise disjoint hairs, and the set $\mathcal{C}$ 
	of endpoints of these hairs has Hausdorff dimension $d$, while $J\setminus \mathcal{C}$ has Hausdorff dimension 1.
\end{thmx}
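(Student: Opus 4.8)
I would prove this by transporting the arguments of Devaney--Krych, McMullen and Karpi\'nska into the quasiregular setting, in four steps.

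\textbf{Step 1: the fixed point and the localisation of $J$.} Since $\normz{F(x)}=e^{x_d}$, the last coordinate of $f_a(x)$ never exceeds $e^{x_d}-a$ in absolute value, so with $a\ge e^M-m$ one gets $f_a(\mathbb{H}_{<M})\subseteq\mathbb{H}_{<m}$; and on $\mathbb{H}_{\le m}$ the a.e.\ bound \eqref{eq:alpha} makes $f_a$ a contraction with ratio $\alpha<1$ (integrating $DF$ along segments, using that $F$ is locally Lipschitz). Banach's fixed-point theorem then gives a unique fixed point $\xi\in\mathbb{H}_{\le m}$ --- this is the uniqueness assertion --- and shows $\mathbb{H}_{\le m}\subseteq A(\xi)$. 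Consequently any point whose orbit ever meets $\mathbb{H}_{<M}$ lies in $A(\xi)$, so $J\subseteq\mathbb{H}_{\ge M}$ and the whole forward orbit of each point of $J$ remains in $\mathbb{H}_{\ge M}$, precisely the region where the strong expansion estimate \eqref{eq:lDF} holds.

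\textbf{Step 2: symbolic dynamics and construction of the hairs.} By the periodicity and reflection structure of Zorich maps, $J$ is contained in a union of countably many closed slabs $T_{\underline{k}}$, each mapped by $f_a$ homeomorphically onto a half-space that contains $\mathbb{H}_{\ge M}$; let $L_{\underline{k}}\colon\mathbb{H}_{\ge M}\to T_{\underline{k}}$ be the corresponding inverse branch. By \eqref{eq:lDF} each $L_{\underline{k}}$ is an $\alpha$-contraction, contracting even by a factor comparable to $e^{-x_d}$ in the directions transverse to the last axis (from the $e^{x_d}$ factor and the bi-Lipschitz map $h$). To $x\in J$ whose orbit avoids the (lower-dimensional, measure-zero) branch set assign its itinerary $\underline{s}(x)=(\underline{s}_0,\underline{s}_1,\dots)$ with $f_a^j(x)\in T_{\underline{s}_j}$. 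For each \emph{admissible} itinerary $\underline{s}$ --- the higher-dimensional counterpart of an exponentially bounded external address, equivalently one for which the limit below is finite --- and each real $t$ above a threshold $t_{\underline{s}}$, set
\[
g_{\underline{s}}(t)=\lim_{n\to\infty}L_{\underline{s}_0}\circ\cdots\circ L_{\underline{s}_{n-1}}\bigl(w_n(t)\bigr),
\]
where $w_n(t)\in\mathbb{H}_{\ge M}$ depends on $t$ and on the tail $(\underline{s}_n,\underline{s}_{n+1},\dots)$ and has last coordinate of $n$-fold iterated-exponential size in $t$; the limit is uniform on compact $t$-intervals since the outer compositions contract by $\alpha^n$. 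One checks that $g_{\underline{s}}$ is continuous and injective, that $g_{\underline{s}}(t)\to\infty$ as $t\to\infty$ (with transverse coordinate tending to a constant), that $f_a\circ g_{\underline{s}}=g_{\sigma\underline{s}}\circ\psi$ for the shift $\sigma$ and a strictly increasing $\psi$, and that the image lies in $J$; reparametrising onto $[0,\infty)$ yields a hair $H_{\underline{s}}$.

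\textbf{Step 3: disjointness and exhaustion.} Distinct pairs $(\underline{s},t)$ give distinct orbits (different itineraries already separate at time $0$; a common itinerary with different parameters gives orbits escaping at different rates), so the $H_{\underline{s}}$ are pairwise disjoint, and there are uncountably many admissible $\underline{s}$. Conversely, for $x\in J$ the orbit lies in $\mathbb{H}_{\ge M}$; after a standard reduction one may take it to avoid the branch set, reads off $\underline{s}(x)$, checks admissibility (the orbit's last coordinates are controlled by the dynamics itself), and identifies $x=g_{\underline{s}(x)}(t)$ for the $t$ matching its escape rate. Hence $J=\bigcup_{\underline{s}\text{ admissible}}H_{\underline{s}}$ is an uncountable disjoint union of hairs.

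\textbf{Step 4: Hausdorff dimensions, and the main obstacle.} The bounds $\dim\mathcal{C}\le d$ and $\dim H_{\underline{s}}=1$ are immediate, so the content is the two reverse inequalities. For $\dim\mathcal{C}\ge d$ one adapts McMullen's argument: fix a large level $R$ and the target slab $S_R=\{R\le x_d\le R+1\}$ intersected with finitely many $T_{\underline{k}}$; the points whose forward orbit returns to $S_R$ at all prescribed times form a Cantor set assembled from finitely many inverse branches, and since each branch image fills a fixed geometric proportion of its host slab with bounded distortion, one can place on it a mass distribution of local H\"older exponent at least $d-\varepsilon(R)$ with $\varepsilon(R)\to 0$; all these points are endpoints, so $\dim\mathcal{C}=d$. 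For $\dim(J\setminus\mathcal{C})\le 1$ --- Karpi\'nska's paradox --- write $J\setminus\mathcal{C}=\bigcup_{\underline{s}}g_{\underline{s}}((t_{\underline{s}},\infty))$; above a large level $R$ each hair is nearly vertical (by \eqref{eq:lDF} the transverse coordinates vary by only a small amount over a unit increase of $x_d$), so the $n$-th generation of these near-vertical arcs can be covered by balls whose radii sum, uniformly in $R$, to a bound on the one-dimensional Hausdorff content of the part of $J\setminus\mathcal{C}$ inside $\mathbb{H}_{\ge R}$, while the bounded part $\mathbb{H}_{\ge M}\setminus\mathbb{H}_{\ge R}$ is a finite union of $f_a$-pullbacks of such tails. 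Letting $R\to\infty$ gives $\dim(J\setminus\mathcal{C})\le 1$, hence $=1$. I expect this last estimate to be the main obstacle: one must quantify precisely how strongly the inverse branches of $f_a$ contract and straighten the hairs in the expanding region --- which is exactly where the bi-Lipschitz geometry of $h$ and the estimates \eqref{eq:alpha}--\eqref{eq:lDF} are used --- in order to reconcile $\dim J=d$ with $\dim(J\setminus\mathcal{C})=1$.
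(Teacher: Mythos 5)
The paper does not itself prove Theorem~\ref{bergweiler1} --- it is quoted from Bergweiler \cite{Ber10} --- but the preliminaries reproduce its main ingredients: the fixed point $\xi\in\mathbb{H}_{\leq m}$, the tracts $T(r)$ and inverse branches $\Lambda^r$, the symbolic space with the notion of admissibility, and the maps $g_k$ whose convergence produces the hairs. Your Steps~1--3 track this scaffolding closely and are, modulo one slip, a correct outline: the Banach fixed-point argument, the itinerary coding, the backward-composition construction of the hair parametrisations, and the correspondence between $J$ and admissible itineraries with parameters. (The slip: $\abs{f_{a,d}(x)}$ is not bounded by $e^{x_d}-a$; by reflection $h_d$ ranges over $[-1,1]$, so only the one-sided bound $f_{a,d}(x)\leq e^{x_d}-a$ holds --- which is all that is actually needed.)

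The genuine gap is in Step~4, exactly where you flag trouble. ``Near-verticality'' of the hairs cannot by itself distinguish $J\setminus\mathcal{C}$ (dimension $1$) from $\mathcal{C}$ (dimension $d$): endpoints lie on the same nearly vertical arcs, so any covering scheme that uses only the straightness of the arcs would bound the dimension of all of $J$, which is false. The mechanism in Karpi\'nska's and Bergweiler's proof, recalled in Section~\ref{Remarks} of this paper, is the parabolic escape domain $\Omega=\{x\in\R^d : x_d>M,\ \normz{\tilde{x}}<\psi(x_d)^2\}$ with $\psi(x)=\exp(\sqrt{\log x})$: one shows that $x\in J\setminus\mathcal{C}$ forces $f^k(x)\in\Omega$ for all large $k$, whereas endpoints may escape arbitrarily slowly and need not enter $\Omega$. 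The bound $\dim(J\setminus\mathcal{C})\leq 1$ then comes from estimating the diameters of the nested pullbacks of $\Omega$ under the inverse branches, using \eqref{eq:alpha}--\eqref{eq:lDF} together with the narrowness of $\Omega$ relative to its height; covering the arcs directly does not capture this dichotomy. Your sketch of the McMullen half ($\dim\mathcal{C}\geq d$), a mass distribution on a Cantor set of points with prescribed bounded return behaviour, is closer to the actual argument.
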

Thus Theorem \ref{bergweiler1} corresponds to Theorem \ref{ThmA}, \ref{ThmB} and \ref{ThmC}. 
For a set $\Omega \subset \R^{d-1}$ we denote by $\operatorname{int}(\Omega)$ the \textit{interior} of $\Omega$. The main result of this article is the following.
\begin{thm} \label{THM}
	Let $f_a$ be as above with $a$ as in \eqref{eq:a} and assume that $h|_{\operatorname{int}(Q)}$ is $C^1$ and $Dh$ is H\"older continuous. Then the hairs of $f_a$ are $C^1$-smooth.
\end{thm}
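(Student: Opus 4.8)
The plan is to adapt Viana's argument (Theorem \ref{ThmD}, \cite{Vi88}) for $\lambda e^z$ to the Zorich setting, the essential new input being that the H\"older continuity of $Dh$ supplies the distortion estimates which, in the holomorphic case, are automatic from conformality. First I record what the hypotheses buy. Since $h$ is bi-Lipschitz, $\norm{Dh(x')v}\geq c\norm{v}$, so $Dh(x')$ is invertible with uniformly bounded inverse on $\operatorname{int}(Q)$; hence $F(x)=e^{x_d}h(x')$ is $C^1$ with invertible, H\"older-continuous derivative on $\operatorname{int}(Q)\times\R$ and its reflected copies, and by \eqref{eq:DF(0)} one has $DF(x)=e^{x_d}\Lambda(x')$ with $\Lambda(x'):=DF(x',0)$ bounded above and below. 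Moreover each hair lies in the interior of a single fundamental beam: a beam face is mapped by $f_a$ into $\{x_d=-a\}\subset\{x_d\leq m\}$ (as $h$ carries $\partial Q$ to the equator), a set that $f_a$ maps into itself (by \eqref{eq:a}, since the last coordinate of $f_a(x)$ is $\leq e^{m}-a\leq e^{M}-a\leq m$) and contracts (by \eqref{eq:alpha}); hence $\{x_d\leq m\}\subset A(\xi)$ and the faces miss $J=\partial A(\xi)$. Consequently all the inverse branches $L_j$ of $f_a$ occurring along a hair are $C^1$ with H\"older-continuous derivative, and $DL_j(y)=\big(DF(L_j(y))\big)^{-1}=e^{-(L_j(y))_d}\big(\Lambda((L_j(y))')\big)^{-1}$.

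Now fix an admissible itinerary $\underline s=(j_0,j_1,\dots)$. Following \cite{Ber10,Vi88}, the hair is realised as a locally uniform limit $\gamma_{\underline s}(t)=\lim_{k\to\infty}\gamma_{\underline s,k}(t)$ with $\gamma_{\underline s,k}(t):=L_{j_0}\circ\cdots\circ L_{j_{k-1}}(p_k(t))$, where the seed $p_k(t)$ depends only on $t$, $k$ and $j_k$ and has last coordinate a $k$-fold iterated exponential of a quantity $\asymp t$. Writing $z^{(i)}:=L_{j_i}\circ\cdots\circ L_{j_{k-1}}(p_k(t))$, the $(z^{(i)})'$ stay in a fixed compact subset of a beam interior, $(z^{(i)})_d$ is comparable to the $i$-fold iterated exponential of $\asymp t$, and since each $L_{j_i}$ with $i$ large contracts by a factor $\leq\alpha$ (by \eqref{eq:lDF}) the composition converges. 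Each $\gamma_{\underline s,k}$ is $C^1$, and differentiating gives
\[
\gamma_{\underline s,k}'(t)=\Big(\prod_{i=0}^{k-1}e^{-(z^{(i)})_d}\Big)\,\big(\Lambda_0^{-1}\Lambda_1^{-1}\cdots\Lambda_{k-1}^{-1}\big)\,p_k'(t),\qquad \Lambda_i:=\Lambda\big((z^{(i)})'\big).
\]
The scalar prefactor is comparable to $\prod_{j=1}^{k}\big(\text{$j$-fold iterated exponential of}\ \asymp t\big)^{-1}$, which exactly cancels the growth of $\norm{p_k'(t)}$; this telescoping is already present for $\lambda e^z$, and it bounds $\norm{\gamma_{\underline s,k}'(t)}$ from above and below uniformly in $k$ on compact $t$-sets, provided the matrix product $\Lambda_0^{-1}\cdots\Lambda_{k-1}^{-1}$ is controlled. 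The remaining task is to prove $\gamma_{\underline s,k}'\to G$ uniformly on compact $t$-sets for some continuous $G$; since $\gamma_{\underline s,k}\to\gamma_{\underline s}$ uniformly this forces $\gamma_{\underline s}'=G$, i.e. $\gamma_{\underline s}\in C^1$. (Near the endpoint only finitely many $L_{j_i}$ fail to contract, but they remain $C^1$ with locally bounded derivative, so the argument applies up to and including the endpoint.)

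The main obstacle is exactly the step that is trivial in the holomorphic setting. For $\lambda e^z$ each $DL_{j_i}$ is multiplication by a complex number, so $\Lambda_0^{-1}\cdots\Lambda_{k-1}^{-1}$ reduces to a product of the numbers $1/z^{(i)}$, whose arguments are summable; hence $\gamma_{\underline s,k}'(t)$ converges in both modulus and direction, and the holomorphic dependence on $t$ moreover upgrades uniform convergence of the $\gamma_{\underline s,k}$ to convergence of all derivatives (Weierstrass), giving $C^\infty$. For Zorich maps the matrices $\Lambda_i^{-1}$ merely have bounded distortion, so the products must be handled directly: one must show both that the scalar--matrix cancellation is stable under perturbation and that the directions $\Lambda_0^{-1}\cdots\Lambda_{k-1}^{-1}v$ converge projectively as $k\to\infty$. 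This is where the H\"older continuity of $Dh$ is used: consecutive approximants, hence the intermediate points $z^{(i)}(t,k)$ and the matrices $\Lambda_i(t,k)$, differ by amounts decaying like the reciprocals of the iterated exponentials, so H\"older continuity makes $\sum_k\norm{\Lambda_i(t,k+1)-\Lambda_i(t,k)}$ converge and, together with the contraction, yields that $\{\gamma_{\underline s,k}'\}_k$ is uniformly Cauchy on compact $t$-sets (equivalently, uniformly bounded and equicontinuous in $t$, so that Arzel\`a--Ascoli applies and every subsequential limit coincides with $\gamma_{\underline s}'$). The drop from $C^\infty$ to $C^1$ is the one to be expected -- estimating second derivatives of the hairs would require corresponding control on $D^2h$ -- and it mirrors the classical fact that an invariant curve of a merely $C^1$ map need not be $C^1$, whereas H\"older continuity of the derivative of the map guarantees a $C^1$ invariant curve.
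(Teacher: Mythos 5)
Your overall strategy matches the paper's: realise the hair as a locally uniform limit of curves $g_k$ built from compositions of inverse branches, differentiate, and use H\"older continuity of $Dh$ to show the derivatives $g_k'$ are Cauchy. The factorisation $DL_j(y)=e^{-(L_j(y))_d}\bigl(\Lambda((L_j(y))')\bigr)^{-1}$ is a valid reorganisation of what the paper does directly with $D\La$, and your diagnosis of why the holomorphic case is easier (scalar derivatives, Weierstrass) is correct.

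There are two genuine problems. First, the claim that the argument ``applies up to and including the endpoint'' is false and contradicts the paper, which states explicitly that differentiability at the endpoints can fail because the hairs may spiral. Theorem~\ref{thm1} yields $C^1$-convergence only on the \emph{open} interval $(t_{\ul{s}},\infty)$: the decisive bounds use that $E^k(t_k)/E^k(t)^{\beta/2}\to 0$ and that $\prod_l(1+1/E^l(t))$ converges, both of which need $t>t_{\ul{s}}$ in an essential way. Your explanation (``only finitely many $L_{j_i}$ fail to contract'') also misreads the geometry --- every $L_{j_i}$ contracts by $\alpha$; the degeneration as $t\downarrow t_{\ul{s}}$ comes from the seed and the admissibility estimate, not from a loss of contraction.

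Second, the central estimate is asserted rather than proven. You state that H\"older continuity makes $\sum_k\norm{\La_i(t,k+1)-\La_i(t,k)}$ converge, but that summability is precisely the hard content of the theorem, not a consequence one can wave at. In the paper it occupies Lemmas~\ref{Rschlange}--\ref{A_k-B_k} plus the three-part estimate of $J_{1,k}$, $J_{2,k}$, $J_{3,k}$: one compares $g_k'$ with $g_{k-1}'$ by peeling off the innermost factor (comparing $L_{k-1}\circ\phi_k$ with $L_{k-1}(0,\dots,0,E^k(t)+M)$), controls the modulus of continuity of $D\La$ through the resolvent identity $A^{-1}-B^{-1}=B^{-1}(B-A)A^{-1}$ combined with Lemma~\ref{estimate_I_2}, and then carefully propagates the resulting error outward through the product using Lemma~\ref{A_k-B_k}. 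Crucially, $z^{(i)}(t,k)$ and $z^{(i)}(t,k+1)$ are compositions of different lengths, so the comparison must be anchored at the innermost level; also $\normz{s_k}$ enters the bound explicitly (see $I_{2,k}$ in Lemma~\ref{estimate_1}), so admissibility of the itinerary is used quantitatively. Your sketch identifies the right ingredients but leaves exactly these estimates --- the heart of the matter --- open.
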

	This theorem corresponds to Theorem \ref{ThmD} concerning the differentiability of hairs. As in Viana's result, we do not obtain differentiability of the hairs in the endpoints since they can spiral around a point. Rempe \cite[3.4.2 Theorem, page 31]{Re03} gave a condition under which we obtain smoothness up to the endpoints for exponential maps. The proof of Theorem \ref{THM} will show that the conclusion of the theorem still holds if the assumptions on $h$ and $Dh$ are satisfied on a suitable subset of $\operatorname{int}(Q)$ (see Section \ref{Remarks}).

\begin{ack}
	I would like to thank my supervisor Walter Bergweiler, Dave Sixsmith and the referee for valuable suggestions.
\end{ack}

\section{Preliminaries}
In this section we are recalling mainly results from \cite{Ber10}, formulating them for functions in $\R^d$ with $d\geq 3$ instead of $\R^3$, however.
For simplicity we write $f=(f_1,\dots,f_d)$ instead of $f_a$. For $r\in \Z^{d-1}$ we denote by
\begin{align}
P(r):=P(r_1,\dots,r_{d-1}) &:=\{x\in\R^{d-1}\colon \forall j\in\{1,\dots,d-1\} \colon \abs{x_j-2r_j}<1\}\\
&\hspace{0.1cm}= \operatorname{int}(Q) +(2r_1,\dots,2r_{d-1}) 
\end{align}
the shifted and open square $Q$ with centre $2r$.  We put
\begin{equation}
S:=\left\{r\in \Z^{d-1}\colon \sum_{j=1}^{d-1} r_j \in 2\Z  \right\}.
\end{equation}
Then 
\begin{equation}
F \text{ maps } P(r)\times \R \text{ bijectively onto }
\begin{dcases}
\mathbb{H}_{>0}, & \text{if } r \in S \\
\mathbb{H}_{<0}, & \text{if } r \notin S \\
\end{dcases}
\end{equation}
and thus
\begin{equation} \label{f maps}
f \text{ maps } P(r)\times \R \text{ bijectively onto }
\begin{dcases}
\mathbb{H}_{>-a}, & \text{if } r \in S, \\
\mathbb{H}_{<-a}, & \text{if } r \notin S. \\
\end{dcases}
\end{equation}
\begin{defn}[Tract]
	For $r\in S$ we call the set
	\begin{equation}
	T(r):=P(r)\times (M,\infty) 
	\end{equation}
	the \textit{tract} above $P(r)$.
\end{defn}
Now we want to understand the behaviour of our function $f$ and collect some important facts about it. Since $f(P(r)\times \R) = \mathbb{H}_{>-a}$ for $r\in S$ and
\begin{equation}
f_d(x_1,\dots,x_d) = e^{x_d}h_d(x_1,\dots,x_{d-1})-a \leq e^{x_d} -e^M +m \leq m < M \label{eq:f_d<M}
\end{equation}
for $x_d\leq M$ and hence $f(P(r)\times (-\infty,M])\subset \mathbb{H}_{<M}$, we have $f(T(r))\supset \mathbb{H}_{\geq M}$. Thus there exists a branch $\Lambda^r \colon \mathbb{H}_{\geq M} \to T(r)$ of the inverse function of $f$. Using the notation $\Lambda:=\Lambda^{(0,\dots,0)}$, we have
\begin{equation}
\Lambda^{(r_1,\dots,r_{d-1})}(x) = \Lambda(x)+(2r_1,\dots,2r_{d-1},0) \label{eq:lambda_shift}  
\end{equation}
for all $x\in \mathbb{H}_{\geq M}$ and $r\in S$. 

In our proofs we will often use the derivative of $\Lambda^r$. Together with \eqref{eq:lambda_shift} we obtain
\begin{equation}
D\Lambda^r(x) = D\Lambda(x)  \label{eq:deriv}
\end{equation}
for all $x\in \mathbb{H}_{\geq M}$ for which the derivative exists. Because $DF=Df$, we deduce from \eqref{eq:alpha} that 
\begin{equation}
\norm{D\Lambda(x)}\leq \alpha \quad \text{a.e.} \label{eq:deriv<alpha}
\end{equation}
for $x\in\mathbb{H}_{\geq M}$. This implies for $x,y\in \mathbb{H}_{\geq M}$ that
\begin{equation}
\normz{\La(x)-\La(y)} \leq \normz{x-y}\esssup\limits_{z\in[x,y]}\norm{D\La(z)} \leq \alpha \normz{x-y}. \label{eq:L(x)-L(y)}
\end{equation}
Since \eqref{eq:f_d<M} and thus $f(\mathbb{H}_{\leq M})\subset \mathbb{H}_{\leq m}$, we obtain using \eqref{eq:lDF}
\begin{equation}
\normz{f(x)-f(y)}\leq \normz{x-y}\esssup\limits_{z\in[x,y]}\norm{Df(z)} \leq \alpha \normz{x-y}.
\end{equation}
So Banach's fixed point theorem gives us the existence of a unique fixed point $\xi\in\mathbb{H}_{\leq m}$ such that $\lim\limits_{n\to\infty} f^n(x)=\xi$ for all $x\in\mathbb{H}_{\leq m}$. Together with \protect \eqref{f maps} this yields
\begin{equation}
J \subset \bigcup_{r\in S} T(r).
\end{equation}
In the following we collect some estimates for $D\La$. The equation \protect\eqref{eq:DF(0)} implies that there exist $c_1, c_2 >0$ such that
\begin{equation}
c_1e^{x_d} \leq l(Df(x)) \leq \norm{Df(x)} \leq c_2e^{x_d} \quad \text{a.e.}
\end{equation}
Thus there are constants $c_3>0$ and $c_4\geq 1$ such that for $x\in\mathbb{H}_{\geq M}$
\begin{align}
l(D\La(x)) &\geq \frac{c_3}{\normz{x}}\quad \text{a.e.} \\
\norm{D\Lambda (x)} &\leq \frac{c_4}{\normz{x}} \quad \text{a.e.} \label{eq:inverse} 
\end{align}
Moreover, there exist $c_5>0$ and $c_6\geq 1$ such that
\begin{equation}
\frac{c_5}{\normz{x}^d} \leq  J_\La (x) \leq \frac{c_6}{\normz{x}^d} \quad \text{a.e.},
\end{equation}
where $J_\La(x)$ denotes the Jacobian determinant.

Let us fix $x,y\in\mathbb{H}_{\geq M}$. Then we can connect $x$ and $y$ by a path $\gamma$ in
\begin{equation}
\mathbb{H}_{\geq M} \cap \Big\{z\in\R^d \colon \normz{z}\geq \operatorname{min}\{\normz{x},\normz{y}\}\Big\}
\end{equation}
with $\operatorname{length(\gamma)}\leq \pi\normz{x-y}$. Together with \eqref{eq:inverse} this yields
\begin{equation}
\normz{\La(x)-\La(y)} \leq \pi \normz{x-y}\esssup\limits_{z\in\gamma}\norm{D\La(z)}\leq c_4\pi \dfrac{\normz{x-y}}{\operatorname{min}\{\normz{x},\normz{y}\}}. \label{eq:c_4/min}
\end{equation}
Since $h$ is bijective, there exists a unique point $(v_1,\dots,v_{d-1})\in Q$ which is mapped under $h$ to the north pole of the sphere, i.e.
\begin{equation}
h(v_1,\dots,v_{d-1})=(0,\dots,0,1). \label{eq:v}
\end{equation}
 This implies $F(v_1,\dots,v_{d-1},x_d)=(0,\dots,0,e^{x_d})$ and for $r\in S$ and $t\geq M$ this yields
\begin{equation}
\La^r(0,\dots,0,t) = \big(v_1+2r_1,\dots,v_{d-1}+2r_{d-1},\log(t+a)\big).
\end{equation}
Moreover, the equation
\begin{equation}
\normz{F(x)}=e^{x_d} 
\end{equation}
yields
\begin{equation}
\normz{f(\La(x))+(0,\dots,0,a)}=e^{\La_d(x)}
\end{equation}
and thus
\begin{equation}
\La_d(x) = \log(\normz{x+(0,\dots,0,a)}).   \label{eq:La_d}
\end{equation}
To discuss the existence of hairs, it is useful, following Schleicher and Zimmer \cite{SZ03}, to define a reference function 
\begin{equation}
E\colon [0,\infty) \to [0,\infty),\, E(t)=e^t -1.  
\end{equation}
This function has the following properties: \\

\noindent 1. We have $E(0)=0$ and $\lim\limits_{k\to \infty}E^k(t)=\infty$ for $t>0$. \\
2. For $b>1$ we have
\begin{equation}
E^k(t)<\log\left(E^{k+1}(t)+b\right)\leq E^k(t)+\log(b). \label{eq:log Ek+1 = Ek+Rk}
\end{equation}
3. If $0<t'<t''<\infty$, then 
\begin{equation}
\lim\limits_{k\to \infty}\big(E^k(t'')-E^k(t')\big)=\infty \quad \text{and} \quad \lim\limits_{k\to \infty}\dfrac{E^k(t'')}{E^k(t')}=\infty. \label{eq:E(t)}
\end{equation}
We need an analogous definition as in the case of exponential maps:
\begin{defn}[External address/Symbolic space]
	For each $x\in J$ we call the sequence
	\begin{equation}
	\ul{s}(x):=s_0 s_1 s_2 \dots = (s_k)_{k\geq 0} \in S^{\N_0}   
	\end{equation}
	such that $f^k(x)\in T(s_k)$ for all $k\geq 0$ the \textit{external address of $x$}. Moreover, we call $\Sigma:=S^{\N_0}$ the \textit{symbolic space}.  
\end{defn}
\begin{defn}[Admissibility/exponentially boundedness]
	We say that $\ul{s}\in \Sigma$ is \textit{admissible} (or \textit{exponentially bounded}), if there exists a $t>0$ such that
	\begin{equation}
	\limsup\limits_{k\to\infty} \frac{\normz{s_k}}{E^k(t)}< \infty.
	\end{equation}
	Moreover, we denote by $\Sigma' \subset \Sigma$ the set of all admissible points. 
\end{defn}
With these definitions we obtain the following lemmas, see \cite[Propositions 3.1 and 3.2]{Ber10}.
\begin{lem} \label{prop1}
	Let $x\in J$. Then $\ul{s}(x)$ is admissible.
\end{lem}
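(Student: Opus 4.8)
The plan is to reduce admissibility of $\ul{s}(x)$ to an elementary growth estimate for $\normz{f^k(x)}$ along the orbit, using only the tract structure and the identity $\normz{F(x)}=e^{x_d}$, and then to compare this growth with the reference iteration $E$. First I would exploit the geometry: since $x\in J\subset\bigcup_{r\in S}T(r)$ and $f(J)\subseteq J$, the external address $\ul{s}(x)$ is well defined and $f^k(x)\in T(s_k)=P(s_k)\times(M,\infty)$ for all $k\ge 0$. Hence the spatial part of $f^k(x)$ lies in $\operatorname{int}(Q)+2s_k$, so $|(f^k(x))_j-2(s_k)_j|<1$ for $j=1,\dots,d-1$, and the triangle inequality for the Euclidean norm yields
\begin{equation}
\normz{s_k}\le\tfrac12\bigl(\normz{f^k(x)}+\sqrt{d-1}\,\bigr).
\end{equation}
Thus it suffices to bound $\normz{f^k(x)}$ by a constant multiple of $E^k(t)$ for some $t>0$.

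Next I would control the last coordinate $v_k:=(f^k(x))_d$, noting $v_0=x_d>M>0$ since $x\in T(s_0)$. Because $f=F-(0,\dots,0,a)$ and any coordinate of a vector is at most its Euclidean norm, $v_{k+1}=(F(f^k(x)))_d-a\le\normz{F(f^k(x))}-a=e^{v_k}-a$; and since $m<M$ and $M\ge 1$ force $a\ge e^M-m>e^M-M\ge e-1>1$, this gives $v_{k+1}\le e^{v_k}-1=E(v_k)$, whence $v_k\le E^k(v_0)$ by induction ($E$ being increasing). Then for $k\ge 1$ we obtain
\begin{equation}
\normz{f^k(x)}=\normz{F(f^{k-1}(x))-(0,\dots,0,a)}\le e^{v_{k-1}}+a\le e^{E^{k-1}(v_0)}+a=E^k(v_0)+1+a,
\end{equation}
using $e^{E^{k-1}(v_0)}=E^k(v_0)+1$. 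Combining this with the bound on $\normz{s_k}$ and dividing by $E^k(v_0)$, which tends to $\infty$ because $v_0>0$, gives $\limsup_{k\to\infty}\normz{s_k}/E^k(v_0)\le\tfrac12<\infty$, so $\ul{s}(x)$ is admissible with $t=v_0=x_d$.

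I do not expect a genuine obstacle here: once the geometric estimate on $\normz{s_k}$ is in place, the rest is a one-step iteration. The only mild points requiring care are (i) checking that the constraints on $m$ and $M$ indeed force $a>1$, so that the crude comparison $e^{v_k}-a\le E(v_k)$ holds directly and no ``slack'' correction has to be carried through the iteration, and (ii) observing that the additive constant $\sqrt{d-1}$ produced by the $(d-1)$-dimensional square is harmless, since $E^k(v_0)$ diverges. An alternative to (i), if one prefers not to invoke $a>1$, is to absorb the constant using property 2 of $E$ (inequality \eqref{eq:log Ek+1 = Ek+Rk}), at the cost of slightly less transparent bookkeeping.
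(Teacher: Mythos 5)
Your proof is correct, and it is essentially the standard argument: the paper does not prove this lemma itself but cites \cite[Proposition 3.1]{Ber10}, and your chain (tract geometry gives $2\normz{s_k}\leq\normz{f^k(x)}+\sqrt{d-1}$; $\normz{F(y)}=e^{y_d}$ together with $a>1$ gives $v_{k+1}\leq E(v_k)$ and hence $\normz{f^k(x)}\leq E^k(x_d)+1+a$) is the natural way to establish admissibility with $t=x_d$. All the side conditions you flag (that $a>1$, that $v_k>M>0$ for every $k$ so the induction with the increasing $E$ is legitimate, and that the additive constants are absorbed since $E^k(x_d)\to\infty$) do hold, so there is no gap.
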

\begin{lem} \label{prop2}
	Let $\ul{s}\in \Sigma'$. Then $\{x\in J : \ul{s}(x)=\ul{s}\}$ is a hair.
\end{lem}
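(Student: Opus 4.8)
The plan is to obtain an explicit parametrisation of a given hair and show it is $C^1$ away from the endpoint. Fix an admissible external address $\ul{s}=s_0s_1s_2\dots\in\Sigma'$; by Lemma \ref{prop2} the set $H_{\ul{s}}:=\{x\in J\colon \ul{s}(x)=\ul{s}\}$ is a hair. The standard construction (as in the proof of Lemma \ref{prop2}, following \cite{SZ03,Ber10}) realises the point of $H_{\ul{s}}$ with ``potential'' $t$ as the limit
\begin{equation}
\gamma_{\ul{s}}(t)=\lim_{k\to\infty}\La^{s_0}\circ\La^{s_1}\circ\dots\circ\La^{s_{k-1}}\big(0,\dots,0,E^k(t)+a'\big)
\end{equation}
for a suitable constant $a'$, the convergence being uniform on compact subsets of $(0,\infty)$ by the contraction estimate \eqref{eq:c_4/min} together with the growth \eqref{eq:E(t)} of $E^k$. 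Write $g_k:=\La^{s_0}\circ\dots\circ\La^{s_{k-1}}$ and $w_k(t):=(0,\dots,0,E^k(t)+a')$, so that $\gamma_{\ul{s}}(t)=\lim_k g_k(w_k(t))$. The goal is to show that the sequence of derivatives $\tfrac{d}{dt}\big(g_k(w_k(t))\big)=Dg_k(w_k(t))\cdot w_k'(t)$ converges uniformly on compact subsets of $(0,\infty)$; since $\gamma_{\ul s}$ is already continuous (indeed a homeomorphism onto $H_{\ul s}$), uniform convergence of the derivatives will give $\gamma_{\ul{s}}\in C^1((0,\infty))$, and the chain of homeomorphisms identifying $H_{\ul s}$ with $[0,\infty)$ then yields $C^1$-smoothness of the hair on $(0,\infty)$, which is the assertion.

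The first substantive step is to upgrade the a.e.\ estimates of Section 2 to genuine pointwise $C^1$ information. The hypothesis that $h|_{\interior(Q)}$ is $C^1$ makes $F$, hence each branch $\La^r$, continuously differentiable on its tract $T(r)$ (away from the reflection faces, which the orbits avoid since they stay in $\bigcup_r T(r)$), so $D\La^r$ exists everywhere on $T(r)$, equals $D\La$ by \eqref{eq:deriv}, and satisfies \eqref{eq:inverse}, i.e. $\norm{D\La(x)}\le c_4/\normz{x}$, pointwise. The Hölder continuity of $Dh$ is the crucial extra ingredient: it yields a Hölder estimate for $D\La$ of the form
\begin{equation}
\norm{D\La(x)-D\La(y)}\le C\,\frac{\normz{x-y}^{\beta}}{\normz{x}^{1+\beta}}\quad\text{for }x,y\in\mathbb{H}_{\ge M}\text{ with }\normz{x-y}\le\tfrac12\normz{x},
\end{equation}
for some $\beta\in(0,1]$ and $C>0$ (derived from \eqref{eq:DF(0)}, which shows $DF$ scales like $e^{x_d}$, combined with the Hölder bound on $Dh$ and the chain rule for the inverse $D\La(x)=\big(DF(\La(x))\big)^{-1}$; note $\normz{\La(x)}\asymp\log\normz{x}$ by \eqref{eq:La_d}, so one must also track how this affects the constants). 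This mixed Lipschitz/Hölder control on $D\La$ is what converts the Cauchy-telescoping argument for $\gamma_{\ul s}$ into one for $\gamma_{\ul s}'$.

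The second step is the telescoping estimate for the derivatives. Writing $A_k(t):=Dg_k(w_k(t))\,w_k'(t)$, one estimates $\normz{A_{k+1}(t)-A_k(t)}$ by inserting $Dg_k\big(\La^{s_k}(w_{k+1}(t))\big)$, using (i) the chain rule $Dg_{k+1}(w_{k+1})=Dg_k(\La^{s_k}(w_{k+1}))\cdot D\La^{s_k}(w_{k+1})$; (ii) the product/operator-norm bound $\norm{Dg_k(x)}\le\prod_{j<k}c_4/\normz{(\text{$j$-th intermediate point})}$, which decays like a product of factors each $\le\alpha<1$ once the potentials are large, giving geometric convergence; (iii) the Hölder modulus of $Dg_k$ obtained by iterating the single-step Hölder bound above, so that the difference of the two occurrences of $Dg_k$ is controlled by $\normz{w_{k+1}(t)-\La^{s_k}(w_{k+1}(t))}^{\beta}$ times a decaying product; and (iv) the elementary bounds $0<w_{k+1}'(t)=E^{k+1}{}'(t)$ and $\normz{w_k(t)}\ge E^k(t)$ with the superexponential growth from property~3 of $E$, which beats all the polynomially-growing factors $\normz{w_k}$ coming from \eqref{eq:inverse}. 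Summing over $k$ on a compact $t$-interval $[t',t'']\subset(0,\infty)$ gives $\sum_k\sup_{[t',t'']}\normz{A_{k+1}-A_k}<\infty$, hence uniform convergence of $A_k$, as desired; the admissibility of $\ul s$ enters to ensure $\normz{s_k}$ does not grow faster than some $E^k(t_0)$, keeping the intermediate points in the regime where the contraction factors are $<1$.

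The main obstacle I expect is step one — propagating regularity through the reflection construction and through the inverse-function/chain-rule bookkeeping to get the \emph{mixed} estimate $\norm{D\La(x)-D\La(y)}\lesssim\normz{x-y}^{\beta}/\normz{x}^{1+\beta}$ with the correct powers of $\normz{x}$, since $D\La$ is a product of $\big(DF\circ\La\big)^{-1}$ with the scaling factor $e^{-\La_d}$, and $\La_d$ itself is only as regular as $\log\normz{x+(0,\dots,0,a)}$; one has to check that the logarithmic loss is harmless against the superexponential gain from $E^k$. A secondary nuisance is that $\gamma_{\ul s}$ as constructed is a priori only a continuous (monotone-in-potential) curve, so some care is needed to identify $\lim_k A_k$ as the actual derivative of the hair's homeomorphic parametrisation rather than of an arbitrary reparametrisation — but since potential is a strictly increasing continuous function along the hair and $w_k'(t)>0$, the limiting tangent vector is nonzero and this identification goes through.
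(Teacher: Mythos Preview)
Your proposal does not prove the stated lemma. Lemma~\ref{prop2} asserts that for an admissible address $\ul s$ the set $H_{\ul s}=\{x\in J:\ul s(x)=\ul s\}$ is a hair, i.e.\ a homeomorphic image of $[0,\infty)$ tending to $\infty$. You \emph{assume} this in your second sentence (``by Lemma~\ref{prop2} the set $H_{\ul s}$ \dots\ is a hair'') and then spend the rest of the proposal sketching a proof of Theorem~\ref{thm1}, the $C^1$-convergence of the approximating curves $g_k$. So the argument is circular with respect to the statement at hand and addresses a different result.

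For comparison, the paper does not give an original proof of Lemma~\ref{prop2} either; it is quoted from \cite[Proposition~3.2]{Ber10}. What the paper does record are the two ingredients that constitute that proof: the sequence $g_k(t)=(L_0\circ\dots\circ L_k)(0,\dots,0,E^{k+1}(t)+M)$ converges locally uniformly on $(t_{\ul s},\infty)$, and a subsequence converges uniformly on $[t_{\ul s},\infty)$, so the limit $g$ extends continuously to $[t_{\ul s},\infty)$. One then checks that $g$ is injective, tends to $\infty$, and that its image is exactly $H_{\ul s}$. None of this requires the $C^1$ or H\"older hypotheses on $h$; only the bi-Lipschitz bounds \eqref{eq:deriv<alpha}, \eqref{eq:inverse}, \eqref{eq:c_4/min} and the growth properties of $E$ are used. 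Your proposal imports the H\"older hypothesis and the derivative machinery, which is irrelevant for Lemma~\ref{prop2} and only enters later for Theorem~\ref{thm1}.

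If you intended to prove Theorem~\ref{thm1}, your outline is in the right spirit and close to the paper's approach, but note two discrepancies: the paper's H\"older estimate for $D\La$ (inequality \eqref{eq:A^-1 - B^-1}) has the form $\norm{D\La(x)-D\La(y)}\lesssim \max\{\normz{x-y},\normz{x-y}^\beta\}/(\normz{x}^\beta\normz{y})$ rather than your $\normz{x-y}^\beta/\normz{x}^{1+\beta}$, and the paper works on $(t_{\ul s},\infty)$ rather than all of $(0,\infty)$, since for $t\le t_{\ul s}$ the admissibility bound on $\normz{s_k}$ need not beat $E^k(t)$.
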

From Lemma \ref{prop1} and Lemma \ref{prop2} it follows that $J$ is the union of hairs as stated in Theorem \ref{bergweiler1}.

Fixing $\ul{s} \in \Sigma'$, we denote
\begin{equation}
t_{\ul{s}} := \inf \bigg\{t>0\colon  \limsup\limits_{k\to\infty} \frac{\normz{s_k}}{E^k(t)}< \infty \bigg\}.
\end{equation}
Choosing $t_k\in [0,\infty)$ such that $2\normz{s_k}=E^k(t_k)$ and putting $\tau_k:=\sup\limits_{j\geq k} t_j$, we obtain
\begin{equation}
t_{\ul{s}}=  \limsup\limits_{k\to\infty} t_k =  \lim\limits_{k\to\infty} \tau_k. \label{eq:t_s}
\end{equation}
Using the abbreviation
\begin{equation}
L_k := \La^{s_k} =\La^{(s_{k,1},\dots,s_{k,d-1})}
\end{equation}
we define for $k\in\N_0$
\begin{equation}
g_k\colon [0,\infty)\to \mathbb{H}_{\geq M},\, g_k(t)=(L_0\circ L_1 \circ \dots \circ L_k)\left(0,\dots,0,E^{k+1}(t)+M\right). \label{eq:g_k}
\end{equation}
The two main lemmas in the proof of Lemma \ref{prop2} are the following \cite[Lemmas 3.1 and 3.2]{Ber10}.
\begin{lem}
	The sequence $(g_k)_{k\geq 0}$ converges locally uniformly on $(t_{\underline{s}},\infty)$.
\end{lem}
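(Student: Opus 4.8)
The plan is to establish uniform convergence on every compact subinterval of $(t_{\ul{s}},\infty)$ by controlling the telescoping differences $\normz{g_{k+1}(t)-g_k(t)}$. So I would fix $t_{\ul{s}}<t_1<t_2<\infty$ and prove that
\[
\sum_{k\geq 0}\ \sup_{t\in[t_1,t_2]}\normz{g_{k+1}(t)-g_k(t)}<\infty ;
\]
this makes $(g_k)$ uniformly Cauchy on $[t_1,t_2]$, and since every compact subset of $(t_{\ul{s}},\infty)$ lies in such an interval, the claim follows. Abbreviating $w_j:=(0,\dots,0,E^{j+1}(t)+M)$, formula \eqref{eq:g_k} reads $g_j(t)=(L_0\circ\dots\circ L_j)(w_j)$, so that $g_{k+1}(t)=(L_0\circ\dots\circ L_k)(L_{k+1}(w_{k+1}))$ while $g_k(t)=(L_0\circ\dots\circ L_k)(w_k)$.

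First I would bound the ``innermost'' displacement $\normz{L_{k+1}(w_{k+1})-w_k}$. Since $L_{k+1}=\La^{s_{k+1}}$ and, along the $x_d$-axis, $\La^{r}(0,\dots,0,u)=(v_1+2r_1,\dots,v_{d-1}+2r_{d-1},\log(u+a))$ with $v$ as in \eqref{eq:v}, the first $d-1$ coordinates of $L_{k+1}(w_{k+1})-w_k$ form a vector of Euclidean norm at most $\sqrt{d-1}+2\normz{s_{k+1}}$, whereas by \eqref{eq:log Ek+1 = Ek+Rk} applied with $b=M+a$ (which exceeds $1$ since, by \eqref{eq:a}, $a\geq e^M-m>e^M-M\geq e-1$) the last coordinate of this difference lies in $(-M,\,\log(M+a)-M]$. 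Hence $\normz{L_{k+1}(w_{k+1})-w_k}\leq 2\normz{s_{k+1}}+C_1$ for a constant $C_1$ depending only on $d$, $M$ and $a$.

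Next I would propagate this displacement outward through $L_0\circ\dots\circ L_k$ by iterating the local Lipschitz estimate \eqref{eq:c_4/min}, which holds verbatim with $\La$ replaced by any $\La^{s_j}$ thanks to \eqref{eq:deriv}. Putting $q_{k+1}:=w_k$, $q_j:=L_j(q_{j+1})$ and $p_{k+1}:=L_{k+1}(w_{k+1})$, $p_j:=L_j(p_{j+1})$ --- all of which lie in $\mathbb{H}_{\geq M}$, with $q_0=g_k(t)$ and $p_0=g_{k+1}(t)$ --- the iteration yields
\[
\normz{g_{k+1}(t)-g_k(t)}\ \leq\ (c_4\pi)^{k+1}\ \frac{\normz{L_{k+1}(w_{k+1})-w_k}}{\prod_{j=1}^{k+1}\min\{\normz{p_j},\normz{q_j}\}} .
\]
The denominator I would bound from below using \eqref{eq:La_d}: since $(q_j)_d=\La_d(q_{j+1})=\log\normz{q_{j+1}+(0,\dots,0,a)}\geq\log((q_{j+1})_d+a)$ and likewise for the $p_j$, and since $a\geq 1$ by the above, a downward induction starting from $(q_{k+1})_d=E^{k+1}(t)+M\geq E^{k+1}(t)$ and $(p_{k+1})_d=\log(E^{k+2}(t)+M+a)>E^{k+1}(t)$ (again by \eqref{eq:log Ek+1 = Ek+Rk}) gives $\min\{\normz{p_j},\normz{q_j}\}\geq E^j(t)$ for every $j\in\{1,\dots,k+1\}$. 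Moreover, since $t\geq t_1>t_{\ul{s}}=\lim_{k}\tau_k$ by \eqref{eq:t_s}, there is $K_1=K_1(t_1)$ with $t_{k+1}\leq\tau_{k+1}<t_1\leq t$ for $k\geq K_1$, hence $2\normz{s_{k+1}}=E^{k+1}(t_{k+1})\leq E^{k+1}(t)$. Combining these, for all sufficiently large $k$ (so that in addition $E^{k+1}(t_1)\geq C_1$) and all $t\in[t_1,t_2]$,
\[
\normz{g_{k+1}(t)-g_k(t)}\ \leq\ \frac{(c_4\pi)^{k+1}(E^{k+1}(t)+C_1)}{\prod_{j=1}^{k+1}E^j(t)}\ \leq\ \frac{2(c_4\pi)^{k+1}}{\prod_{j=1}^{k}E^j(t)}\ \leq\ \frac{2(c_4\pi)^{k+1}}{\prod_{j=1}^{k}E^j(t_1)} .
\]
The ratio of consecutive terms of the rightmost series equals $c_4\pi/E^{k+1}(t_1)\to 0$, so it converges by the ratio test, and the finitely many omitted small-$k$ terms are finite because the $g_k$ are continuous; this gives the required summability, uniformly in $t\in[t_1,t_2]$.

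The step I expect to be the main obstacle is exactly the lower bound $\min\{\normz{p_j},\normz{q_j}\}\geq E^j(t)$ together with $\normz{s_{k+1}}\lesssim E^{k+1}(t)$: this is where the hypothesis $t>t_{\ul{s}}$ enters (through \eqref{eq:t_s}), and it is what forces the product in the denominator to grow super-exponentially in $k$ and thereby to overpower both the geometric factor $(c_4\pi)^{k+1}$ and the essentially linear-in-$E^{k+1}(t)$ numerator. Note that the crude contraction bound \eqref{eq:L(x)-L(y)} is useless here: it would only give $\normz{g_{k+1}(t)-g_k(t)}\leq\alpha^{k+1}(2\normz{s_{k+1}}+C_1)$, and $\alpha^{k+1}E^{k+1}(t)\to\infty$. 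The sharper estimates \eqref{eq:inverse}--\eqref{eq:c_4/min}, reflecting the strong expansion of $f$ near $\infty$, are essential.
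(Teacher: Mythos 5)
Your proof is correct. The paper itself does not reproduce a proof of this lemma (it is quoted from Bergweiler, \cite[Lemma~3.1]{Ber10}), but your argument — telescope $g_{k+1}-g_k$, bound the innermost displacement by $2\normz{s_{k+1}}+O(1)$ via \eqref{eq:La_d}--\eqref{eq:v}, propagate it outward by iterating the weighted Lipschitz estimate \eqref{eq:c_4/min}, lower-bound the intermediate norms by $E^j(t)$ (this is precisely the content of Lemma~\ref{Rschlange}/Remark~\ref{doubleschlange}, which you rederive), and invoke admissibility through \eqref{eq:t_s} to absorb $2\normz{s_{k+1}}$ — is exactly the mechanism the paper deploys throughout Section~3 and is, to the best of my knowledge, also Bergweiler's original route; your closing remark that the crude contraction \eqref{eq:L(x)-L(y)} would not suffice is also on point.
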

\begin{lem}
	The sequence $(g_k)_{k\geq 0}$ has a subsequence which converges uniformly on $[t_{\ul{s}},\infty)$ and thus $g$ extends to a continuous map $g\colon [t_{\ul{s}},\infty) \to \mathbb{H}_{\geq M}$.
\end{lem}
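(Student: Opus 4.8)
The plan is to establish uniform convergence of a subsequence of $(g_k)_{k\geq 0}$ on $[t_{\underline s},\infty)$ by producing a single uniform modulus-of-continuity / tail estimate that is valid all the way down to the endpoint $t_{\underline s}$, and then to invoke Arzelà--Ascoli. First I would record the key consequence of \eqref{eq:c_4/min}: since each $L_j=\Lambda^{s_j}$ is a branch of the inverse of $f$ with $\norm{D\Lambda}\le \alpha$ by \eqref{eq:deriv<alpha}, and since each $L_j$ maps into $\mathbb{H}_{\ge M}$ where the sharper bound \eqref{eq:inverse} holds, the composition $L_0\circ\dots\circ L_k$ is a strong contraction once the argument is large. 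Concretely, writing $u_k(t):=(0,\dots,0,E^{k+1}(t)+M)$ and applying \eqref{eq:La_d} together with \eqref{eq:log Ek+1 = Ek+Rk}, the $d$-th coordinate of $(L_j\circ\dots\circ L_k)(u_k(t))$ is comparable to $E^{j}(t)$; hence by \eqref{eq:c_4/min} the Lipschitz constant of $L_j$ along the relevant segment is at most $c_4\pi/E^{j}(t)$ (up to additive constants from the shift and from the $+M$, which only help). Telescoping, the total Lipschitz constant of $g_k$ as a function of $t$ on any interval $[t',\infty)$ with $t'>t_{\underline s}$ is $O\big(\prod_{j=1}^{k} (c_4\pi/E^{j}(t'))\big)$, which is summable and tends to $0$ because $E^{j}(t')\to\infty$ super-exponentially by property 3 of $E$ (equation \eqref{eq:E(t)}).

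The difficulty is precisely that this bound degenerates as $t\downarrow t_{\underline s}$: when $2\norm{s_k}=E^k(t_k)$ with $t_k$ arbitrarily close to $t_{\underline s}$, the argument fed into $L_k$ is no longer large compared to $\norm{s_k}$, so the contraction estimate along the last few inverse branches breaks down near the endpoint. The resolution, following the structure of \cite{Ber10}, is to split the estimate at the level-dependent threshold: for the ``bulk'' of the composition, $L_0\circ\dots\circ L_{k-N}$ where $N=N(k)$ is chosen so that $E^{k-N}(t_{\underline s})$ already dominates $\tau_{k-N}$ (possible by \eqref{eq:t_s}), the telescoping contraction argument above applies uniformly on all of $[t_{\underline s},\infty)$ and shows that the tail $\norm{g_k(t)-g_{k-1}(t)}$ is controlled, uniformly in $t\in[t_{\underline s},\infty)$, by a constant times $\sup_j t_j$ times the product $\prod_{j=1}^{k-N} (c_4\pi/E^{j}(t_{\underline s}))$; for the remaining finitely many ``top'' branches $L_{k-N+1},\dots,L_k$ one uses only the uniform bound \eqref{eq:L(x)-L(y)} (each such branch is $\alpha$-Lipschitz with $\alpha\in(0,1)$), so they contribute a factor $\alpha^{N}$ and do not spoil anything. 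Choosing $N(k)\to\infty$ slowly, both pieces go to $0$, giving a uniform Cauchy estimate for $(g_k)$ on $[t_{\underline s},\infty)$.

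Finally, to get a convergent \emph{subsequence} and the continuous extension, I would argue as follows. The estimates above show $(g_k)$ is uniformly bounded on any $[t',\infty)$ with $t'>t_{\underline s}$ (it already converges there by the previous lemma), and the split estimate shows that near $t_{\underline s}$ the functions $g_k$ have a common modulus of continuity after the ``top'' block is absorbed; more precisely, for $t,t'\in[t_{\underline s},\infty)$ one bounds $\norm{g_k(t)-g_k(t')}$ by splitting the composition at level $k-N$ exactly as above, estimating the bulk part by its (small) Lipschitz constant times $\abs{t-t'}$ and the top part crudely by $\alpha^{N}\cdot(\text{diam of the relevant tract region})$, which is uniformly small. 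Hence $(g_k)$ is equicontinuous on $[t_{\underline s},\infty)$ (equivalently, on a compactification, after noting the functions escape to $\infty$ in a controlled way, so one works on $[t_{\underline s},T]$ for each finite $T$ and diagonalizes). Arzelà--Ascoli then yields a subsequence converging uniformly on $[t_{\underline s},T]$ for every $T$; combined with the known locally uniform convergence of the full sequence on $(t_{\underline s},\infty)$, the limit is the same $g$, and it extends continuously to $t=t_{\underline s}$. The main obstacle, as indicated, is organizing the level-dependent split $N=N(k)$ so that the bulk contraction estimate is genuinely uniform down to $t_{\underline s}$ while the discarded top block still contributes a vanishing factor; once that bookkeeping is set up, everything else is routine.
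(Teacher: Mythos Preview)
The paper does not actually prove this lemma: it is quoted directly from \cite[Lemma~3.2]{Ber10} (see the sentence immediately preceding the two lemmas in Section~2). So there is no proof in the present paper to compare your proposal against; any comparison would have to be with Bergweiler's original argument, which this paper only cites.

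That said, your sketch has a genuine internal inconsistency and a real gap. In the middle paragraph you claim that the level-dependent split produces a \emph{uniform Cauchy} estimate for $(g_k)$ on $[t_{\underline s},\infty)$. If that were true, the entire sequence would converge uniformly and the lemma's careful restriction to a \emph{subsequence} would be superfluous; the fact that both \cite{Ber10} and the present paper state only subsequential convergence should make you suspicious. And indeed your ``bulk'' estimate does not work at $t=t_{\underline s}$: you need $E^{j}(t_{\underline s})$ to dominate $2\|s_j\|=E^{j}(t_j)$ for the relevant $j$, but by definition of $t_{\underline s}=\limsup t_k$ there will be infinitely many $j$ with $t_j>t_{\underline s}$, so the contraction factor $c_4\pi/E^{j}(t_{\underline s})$ is \emph{not} uniformly small along those indices. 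In the extreme case $t_{\underline s}=0$ you even have $E^{j}(t_{\underline s})=0$ for all $j$, and the bulk estimate collapses entirely.

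Your fallback Arzel\`a--Ascoli paragraph is closer to the actual strategy in \cite{Ber10}, but the equicontinuity step is not established by what you wrote: bounding the ``top'' block by $\alpha^{N}\cdot(\text{diameter})$ gives a uniform \emph{bound}, not a modulus of continuity in $t$, and the ``bulk'' Lipschitz estimate again degenerates at $t_{\underline s}$ for the same reason as above. What is really needed (and what Bergweiler does) is a direct estimate on $\|g_k(t)-g_k(t_{\underline s})\|$ that exploits the specific structure of $\Lambda_d$ via \eqref{eq:La_d} together with the monotone decrease $\tau_k\downarrow t_{\underline s}$, rather than a generic chain-rule product; this is where the passage to a subsequence becomes natural.
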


\section{Proof of Theorem \ref{THM}}

To have a chance for a $C^1$ condition for our hairs, we need enough regularity of the bi-Lipschitz mapping $h$. In this section we want to specify this condition and want to give precise $C^1$ estimates for the hairs. 

Therefore we will introduce some new notations. For $n,m\in\N$ with $m<n$ and functions $f_1,\dots, f_n \colon \R^d \to \R^d$ we denote
\begin{equation}
\Comp_{j=m}^{n}f_j := f_m \circ \dots \circ f_n,
\end{equation}
where we use the convention
\begin{equation}
\Comp_{j=m+1}^{m}f_j := id. 
\end{equation}
Then $g_k$ defined by \protect \eqref{eq:g_k} takes the form
\begin{equation}
g_k(t)= \left(\Comp_{j=0}^{k}L_j\right)\left(0,\dots,0,E^{k+1}(t)+M\right) 
\end{equation}
for all $k\in\N_0$ and $t\in [0,\infty)$. 
If $h$ is locally $C^1$, the derivative $g'_k$ then reads as
\begin{equation}
\begin{aligned}
g'_k(t)&= \dfrac{d}{dt}\left(\left(L_0\circ \dots \circ L_{k}\right)\left(0,\dots,0,E^{k+1}(t)+M\right)\right) \\
&=\prod_{l=1}^{k+1}\left(D\La\left(\left(\Comp_{j=l}^{k}L_j\right)(0,\dots,0,E^{k+1}(t)+M) \right)  \right) \\
&\hspace{1cm}\cdot \left(0,\dots,0,(E^{k+1})'(t)\right)^T \label{eq:g_k'}
\end{aligned}
\end{equation}
for $k\in\N$. 

To prove Theorem \ref{THM}, it is enough to show the following result.
\begin{thm} \label{thm1}
	Let $f$ be as before and assume that $h|_{\operatorname{int}(Q)}$ is $C^1$ and $Dh$ is H\"older continuous. Then for all $\ul{s}\in \Sigma'$ the sequence $(g_k)_{k\geq 0}$ consists of $C^1$-curves which converge (in $C^1$-sense) locally uniformly on $(t_{\ul{s}},\infty)$.
\end{thm}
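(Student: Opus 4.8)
We sketch the argument. Since $h$ is bi-Lipschitz and $h|_{\interior(Q)}$ is $C^1$, continuity of $Dh$ together with the bi-Lipschitz bound shows that $Dh$ is everywhere injective on $\interior(Q)$, so $h^{-1}$ is $C^1$ on the open hemisphere $h(\interior(Q))$. Combining this with
\[
\La(y)=\left(h^{-1}\!\left(\frac{y+(0,\dots,0,a)}{\normz{y+(0,\dots,0,a)}}\right),\ \log\normz{y+(0,\dots,0,a)}\right),
\]
which follows from $\normz{F(x)}=e^{x_d}$ and \eqref{eq:La_d} (and for $y\in\mathbb{H}_{\geq M}$ the argument of $h^{-1}$ lies in the open hemisphere because $y_d+a\geq M+a>0$), we see that $\La$, hence every $L_j$, hence every $g_k$, is $C^1$, with $g_k'$ given by \eqref{eq:g_k'}; moreover \eqref{eq:inverse} now holds everywhere on $\mathbb{H}_{\geq M}$. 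Since the local uniform convergence $g_k\to g$ on $(t_{\ul{s}},\infty)$ has already been recalled above, it suffices to show that $(g_k')$ is uniformly Cauchy on every compact interval $[\sigma,\rho]\subset(t_{\ul{s}},\infty)$; this gives $g\in C^1$ and $g_k\to g$ in the $C^1$ sense on $(t_{\ul{s}},\infty)$, and the theorem follows. Fix such an interval and $\tau\in(t_{\ul{s}},\sigma)$.

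The heart of the proof is an a priori bound $\norm{g_k'(t)}\leq B$ valid for all $k$ and all $t\in[\sigma,\rho]$. We evaluate the product in \eqref{eq:g_k'} from the innermost factor outward. Writing $p_{k,l}(t):=\left(\Comp_{j=l}^{k}L_j\right)(0,\dots,0,E^{k+1}(t)+M)$, the recursion $(p_{k,l}(t))_d=\log\normz{p_{k,l+1}(t)+(0,\dots,0,a)}$ together with \eqref{eq:log Ek+1 = Ek+Rk} and $a\geq e^M-m>1$ gives $(p_{k,l}(t))_d\geq E^l(t)$, hence $\normz{p_{k,l}(t)+(0,\dots,0,a)}\geq E^l(t)$, for $1\leq l\leq k+1$. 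Differentiating the displayed formula for $\La$ shows that $D\La(y)$ sends the unit radial direction $(y+(0,\dots,0,a))/\normz{y+(0,\dots,0,a)}$ to $(0,\dots,0,1)/\normz{y+(0,\dots,0,a)}$, while its norm on the orthogonal complement is at most $c_4/\normz{y}$, with $c_4$ governed by the bi-Lipschitz constant of $h$. Now $(0,\dots,0,(E^{k+1})'(t))^{T}$ is radial at the innermost point $p_{k,k+1}(t)=(0,\dots,0,E^{k+1}(t)+M)$, and one checks inductively that the running vector stays within angle $\vartheta_l$ of the radial direction at $p_{k,l}(t)$, where $\vartheta_l\lesssim E^l(\tau)/E^l(t)$ for all large $l$: indeed the horizontal coordinates of $p_{k,l}(t)$ are bounded by $C(1+\normz{s_l})$ and admissibility of $\ul{s}$ gives $\normz{s_l}\leq\tfrac12 E^l(\tau)$ for large $l$, so the radial direction is nearly vertical there, and the possible amplification of the angle by the distortion constant at each step is absorbed because $\vartheta_l$ decays super-exponentially by \eqref{eq:E(t)} (the finitely many small indices $l$, where admissibility has not yet taken effect, contribute only a fixed constant). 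Consequently the norm gain at the $l$-th factor is at most $(1+C\vartheta_l)/\normz{p_{k,l}(t)+(0,\dots,0,a)}\leq(1+C\vartheta_l)/E^l(t)$, and since $(E^{k+1})'(t)=\prod_{l=1}^{k+1}(E^l(t)+1)$ we obtain
\[
\norm{g_k'(t)}\ \leq\ \prod_{l=1}^{k+1}(1+C\vartheta_l)\Big(1+\tfrac{1}{E^l(t)}\Big)\ \leq\ B,
\]
both products converging uniformly on $[\sigma,\rho]$ because $\sum_{l}1/E^l(t)<\infty$ for $t>0$.

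For the Cauchy estimate we proceed as in Bergweiler's proof of the convergence of $(g_k)$. With $\Phi_k:=L_0\circ\cdots\circ L_k$, $q_k(t):=(0,\dots,0,E^{k+1}(t)+M)$ and, by \eqref{eq:log Ek+1 = Ek+Rk}, $\widetilde q_k(t):=L_{k+1}(0,\dots,0,E^{k+2}(t)+M)=(2s_{k+1}+v,\ E^{k+1}(t)+R_k(t))$ with $0\leq R_k(t)\leq\log(M+a)$ and $v=(v_1,\dots,v_{d-1})$ as in \eqref{eq:v}, we have $g_k=\Phi_k\circ q_k$ and $g_{k+1}=\Phi_k\circ\widetilde q_k$, whence
\[
g_{k+1}'(t)-g_k'(t)=\big(D\Phi_k(\widetilde q_k(t))-D\Phi_k(q_k(t))\big)q_k'(t)+D\Phi_k(\widetilde q_k(t))\big(\widetilde q_k'(t)-q_k'(t)\big).
\]
In the second term $\widetilde q_k'(t)-q_k'(t)=(0,\dots,0,R_k'(t))^{T}$ with $\abs{R_k'(t)}=(E^{k+1})'(t)\,(M+a-1)/(E^{k+2}(t)+M+a)$; since this vector is vertical, the estimate of the previous paragraph applies and bounds the second term by a constant times $1/E^{k+2}(\sigma)$, which is summable in $k$. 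For the first term one telescopes $D\Phi_k(\widetilde q_k(t))-D\Phi_k(q_k(t))$ over its $k+1$ matrix factors; each summand contains exactly one factor $D\La(\widetilde p)-D\La(p)$, where $\widetilde p,p$ are corresponding iterates of $\widetilde q_k(t)$ and $q_k(t)$. To this factor we apply
\[
\norm{D\La(x)-D\La(y)}\ \leq\ C\,\frac{\normz{x-y}^{\beta}}{\min\{\normz{x},\normz{y}\}^{1+\beta}}\qquad\big(\normz{x-y}\leq\tfrac12\min\{\normz{x},\normz{y}\}\big),
\]
which is exactly where the H\"older continuity (with exponent $\beta$) of $Dh$ enters: it makes $D(h^{-1})$ H\"older on compact subsets of the open hemisphere, while the remaining ingredients of $\La$ — radial projection and $\log\normz{\cdot}$ — are smooth with the indicated scaling; by admissibility the relevant arguments of $h^{-1}$ stay near the north pole, hence in such a compact subset. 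The distances $\normz{\widetilde p-p}$ are controlled by propagating $\normz{\widetilde q_k(t)-q_k(t)}\leq2\normz{s_{k+1}}+C\lesssim E^{k+1}(\tau)$ through the contractions $L_l$ via \eqref{eq:c_4/min}, and the surrounding blocks of factors $D\La$, acting on $q_k'(t)$, are estimated by the near-vertical bookkeeping of the previous paragraph. Summing over the $k+1$ summands and then over $k$ yields $\sum_k\sup_{t\in[\sigma,\rho]}\norm{g_{k+1}'(t)-g_k'(t)}<\infty$, as required.

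The step I expect to be the main obstacle is the a priori bound, together with its reuse in the Cauchy estimate: one must show that the doubly-exponential growth of $(E^{k+1})'(t)$ is compensated exactly by the doubly-exponential shrinking of the product of the differentials $D\La$, without the distortion constants accumulating into a factor growing with $k$. This requires that the relevant vectors remain aligned, up to a summable angular defect, with the "good" (radial) directions, which in turn forces one to use the admissibility of $\ul{s}$ to keep the orbit $p_{k,l}(t)$ close to the vertical axis; this is the quasiregular counterpart of the delicate point in Viana's argument. By comparison, the H\"older continuity of $D\La$ is a routine consequence of the hypothesis on $Dh$.
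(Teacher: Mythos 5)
Your proposal takes a genuinely different route from the paper. The paper never establishes an a priori bound on $\normz{g_k'(t)}$ and never tracks the angular alignment of the running vector with the radial direction. Instead, it estimates $\normz{g_k'(t)-g_{k-1}'(t)}\leq c_{13}\alpha^{k-1}$ directly, by rewriting $g_k'-g_{k-1}'$ as $A_k(t)\bigl[\tfrac{d}{dt}(L_{k-1}\circ\phi_k)-\tfrac{d}{dt}(L_{k-1}(0,\dots,0,E^k(t)+M))\bigr]+(A_k(t)-B_k(t))\tfrac{d}{dt}(L_{k-1}(0,\dots,0,E^k(t)+M))$, where $A_k,B_k$ are the outer products of $D\La$ along the two orbits, and splitting into three contributions $J_{1,k},J_{2,k},J_{3,k}$. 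The accumulating operator-norm constant $c_4^{k-1}$ (from \eqref{eq:inverse} applied across the blocks) and the factor $2\normz{s_k}+1\approx E^k(t_k)$ (from \eqref{eq:A^-1 - B^-1}) are then absorbed into the super-exponentially growing denominator $E^k(t)^\beta$ by writing $E^k(t)^\beta = E^k(t)^{\beta/2}\cdot E^k(t)^{\beta/2}$: one half kills $c_4^{k-1}$ for large $k$, the other kills $E^k(t_k)$ via \eqref{eq:E(t)} and \eqref{eq:t_s}. So the paper entirely sidesteps the near-radial alignment analysis which you identify (correctly) as the main obstacle of your approach; what this buys is that no angular recursion $\theta_{l-1}\lesssim c_4\theta_l+\vartheta_{l-1}$ needs to be set up, and no sum $\sum_l c_4^l\vartheta_l<\infty$ has to be verified. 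What your approach buys, if completed, is a conceptually transparent explanation of why $(E^{k+1})'(t)$ and the contraction of the $D\La$'s cancel to leading order.

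There is, however, a genuine gap in your Cauchy estimate. When you telescope $D\Phi_k(\widetilde q_k)-D\Phi_k(q_k)$ over its factors, the $r$-th summand is $\bigl(\prod_{l<r}D\La(\widetilde p_l)\bigr)\cdot\bigl(D\La(\widetilde p_r)-D\La(p_r)\bigr)\cdot\bigl(\prod_{l>r}D\La(p_l)\bigr)\cdot q_k'(t)$. The inner block $\prod_{l>r}D\La(p_l)\cdot q_k'(t)$ does propagate a radially aligned vector and can be controlled by your near-vertical bookkeeping; but the vector entering the outer block $\prod_{l<r}D\La(\widetilde p_l)$ is the output of the defect $D\La(\widetilde p_r)-D\La(p_r)$, and this is no longer aligned with the radial direction at $\widetilde p_r$. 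Your phrase "the surrounding blocks of factors $D\La$, acting on $q_k'(t)$, are estimated by the near-vertical bookkeeping" conflates the two: the outer block does not act on $q_k'(t)$, and the bookkeeping does not apply to it. One must fall back on the raw bound $\norm{D\La(y)}\leq c_4/\normz{y}$ for the outer block, yielding an accumulated factor $c_4^{r-1}$ that is unaccounted for in your sketch; absorbing it requires exactly the kind of $E^r(t)^\beta$-splitting the paper uses (together with the improved distance bound $d_r\lesssim\alpha^{k-r}c_{11}/E^{k}(t)$ rather than merely $d_r\to 0$). The a priori bound itself also needs the observation that the linearized angular recursion is valid only once $\theta_l$ is small, and the finitely many large-angle steps must be handled with the crude bound $\norm{D\La(y)}\leq c_4/\normz{y}$; this is tractable but should be made explicit.
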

For the proof of the theorem we will compare $g_k'$ and $g_{k-1}'$ in a suitable way. Therefore we define as a preparation for all $k\in\N_0$ the auxilary function
\begin{equation}
\phi_k \colon [0,\infty)\to \mathbb{H}_{\geq M},\, \phi_k(t)=L_k(0,\dots,0,E^{k+1}(t)+M). \label{eq:phi_k}
\end{equation}
Then we obtain for all $t\geq 0$
\begin{equation}
\phi_k(t) = \big(v_1+2s_{k,1},\dots,v_{d-1}+2s_{k,d-1},\log(E^{k+1}(t)+M+a)\big).
\end{equation}
Thus
\begin{equation}
\phi'_k(t)=\left(0,\dots,0,\frac{1}{E^{k+1}(t)+M+a}E'(E^k(t))\cdot (E^k)'(t)\right) \label{eq:phi1}
\end{equation}
and
\begin{equation}
\phi_k'(t) = D\La\left(0,\dots,0,E^{k+1}(t)+M\right) \cdot \left(0,\dots,0,\left(E^{k+1}\right)'(t)\right)^T. \label{eq:phi2}
\end{equation}
Noticing that 
\begin{equation}
a\geq e^M - m \geq 1+M-m > 1, \label{eq:a>1}
\end{equation}
we obtain
\begin{equation}
\normz{\phi_k'(t)} = \dfrac{E^{k+1}(t)+1}{E^{k+1}(t)+M+a}\cdot (E^k)'(t) \leq (E^k)'(t).  \label{eq:phi_k' < E^k'}
\end{equation}

\begin{lem} \label{Rschlange}
	For all $k\in \N_{\geq 2}$ and $l\in \{1,\dots,k-1\}$ and for all $t \geq 0$ we have 
	\begin{equation}
	\normz{\left(\Comp_{j=l}^{k-1}L_j\right)(\phi_k(t))}\geq \left(\Comp_{j=l}^{k-1}L_j\right)_d(\phi_k(t))  \geq E^{l}(t). \label{eq:E^l(t)}
	\end{equation}
\end{lem}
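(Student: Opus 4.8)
The plan is to reduce everything to a single downward induction on the index of the composition. The left-hand inequality in \eqref{eq:E^l(t)} is free: every $v\in\R^d$ satisfies $\normz{v}\ge v_d$, and since each $L_j=\La^{s_j}$ maps $\mathbb{H}_{\ge M}$ into the tract $T(s_j)\subset\mathbb{H}_{>M}$, the point $\left(\Comp_{j=l}^{k-1}L_j\right)(\phi_k(t))$ is genuinely defined and its last coordinate is the quantity in the middle term. So the real content is the right-hand inequality.

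First I would fix notation. For $l\le j\le k$ put $w^{(j)}:=\left(\Comp_{i=j}^{k-1}L_i\right)(\phi_k(t))$, so that $w^{(k)}=\phi_k(t)$ (the empty composition is $\id$) and $w^{(j)}=L_j\!\left(w^{(j+1)}\right)$ for $l\le j\le k-1$. Every $w^{(j)}$ with $j<k$ lies in $\mathbb{H}_{>M}$, and $w^{(k)}=\phi_k(t)\in\mathbb{H}_{\ge M}$ by the definition \eqref{eq:phi_k} of $\phi_k$, so each application of a branch $L_j$ is legitimate. The claim to prove by downward induction is then $w^{(j)}_d\ge E^j(t)$ for all $j\in\{l,\dots,k\}$; taking $j=l$ gives exactly the right-hand inequality of \eqref{eq:E^l(t)}.

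For the base case $j=k$ I would use $w^{(k)}_d=\phi_{k,d}(t)=\log\!\left(E^{k+1}(t)+M+a\right)$ together with \eqref{eq:log Ek+1 = Ek+Rk} applied with $b=M+a$, which exceeds $1$ by \eqref{eq:a>1}; this gives $w^{(k)}_d>E^k(t)$. For the inductive step, assume $j<k$ and $w^{(j+1)}_d\ge E^{j+1}(t)$. Since by \eqref{eq:lambda_shift} the last coordinate of $\La^r$ does not depend on $r$, equation \eqref{eq:La_d} yields
\[
w^{(j)}_d = L_{j,d}\!\left(w^{(j+1)}\right) = \log\!\left(\normz{w^{(j+1)}+(0,\dots,0,a)}\right) \ge \log\!\left(w^{(j+1)}_d+a\right) \ge \log\!\left(E^{j+1}(t)+a\right),
\]
where the first inequality is $\normz{x+(0,\dots,0,a)}\ge x_d+a$ (valid since $x_d>0$ and $a>0$) and the second is the inductive hypothesis together with monotonicity of $\log$. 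Applying \eqref{eq:log Ek+1 = Ek+Rk} once more, now with $b=a>1$ by \eqref{eq:a>1}, gives $w^{(j)}_d>E^j(t)$, which closes the induction.

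I do not expect a genuine obstacle: the lemma is a bookkeeping exercise. The only points needing care are that the intermediate points stay in the domain $\mathbb{H}_{\ge M}$ of the inverse branches (handled by $T(s_j)\subset\mathbb{H}_{>M}$ together with the codomain of $\phi_k$) and that \eqref{eq:log Ek+1 = Ek+Rk} is invoked with the correct constant — $b=M+a$ in the base step, $b=a$ in the inductive step — both of which are $>1$ by \eqref{eq:a>1}.
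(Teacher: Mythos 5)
Your proof is correct and follows essentially the same route as the paper: downward induction using \eqref{eq:La_d} to rewrite the $d$-th coordinate of each $L_j$, the lower bound $\normz{x+(0,\dots,0,a)}\geq x_d+a$, and property \eqref{eq:log Ek+1 = Ek+Rk} with $b>1$ guaranteed by \eqref{eq:a>1}. The only cosmetic difference is that you extend the claim to the empty composition $j=k$ (which makes the base case trivially a statement about $\phi_{k,d}(t)$ alone), whereas the paper starts the induction at $l=k-1$ and therefore proves the base case by applying the estimate chain once; the two organizations contain the same content.
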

\begin{proof}
	From \protect \eqref{eq:La_d}, \protect \eqref{eq:log Ek+1 = Ek+Rk} and \protect \eqref{eq:a>1} we deduce that
	\begin{align}
	\normz{L_{k-1}(\phi_k(t))} &\geq \abs{L_{k-1,d}(\phi_k(t))} \\
	&= \abs{\La_d(\phi_k(t))} \\
	&= \log(\normz{\phi_k(t)+(0,\dots,0,a)})\\
	&\geq \log\left(\phi_{k,d}(t)+a\right) \\
	&= \log\left(\log\left(E^{k+1}(t)+M+a\right)+a\right) \\
	&\geq \log(E^k(t)+a) \\
	&\geq E^{k-1}(t).
	\end{align}
	Take now $l<k-1$ such that \eqref{eq:E^l(t)} is true with $l$ replaced by $l+1$. Then we obtain
	\begin{align}
	\normz{\left(\Comp_{j=l}^{k-1}L_j\right)(\phi_k(t))} &= \normz{L_l\circ \left(\Comp_{j=l+1}^{k-1}L_j\right)(\phi_k(t))} \\
	&\geq \abs{L_{l,d}\left(\left(\Comp_{j=l+1}^{k-1}L_j\right)(\phi_k(t))\right)} \\
	&= \log\left(\normz{\left(\Comp_{j=l+1}^{k-1}L_j\right)(\phi_k(t))+(0,\dots,0,a)}\right) \\
	&\geq \log\left(\left(\Comp_{j=l+1}^{k-1}L_j\right)_d(\phi_k(t))+a\right) \\
	&\geq \log(E^{l+1}(t)+a) \\
	&\geq E^l(t)
	\end{align}
	which proves the result.
\end{proof}
\begin{remark} \label{doubleschlange}
	The argument shows that the conclusion of Lemma \ref{Rschlange} also holds if $\phi_k(t)$ is replaced by $(0,\dots,0,E^k(t)+M)$.
\end{remark}
Since the operatornorm of $DF(x)$ is comparable to the maximum of all entries of this matrix, there exists a constant $C>0$ such that 
\begin{equation}
\norm{DF(x)}\leq C \max_{\substack{1\leq j\leq d \\ 1\leq k \leq d}}  \abs{DF_{jk}(x)} \label{eq:oper_max_entry}.
\end{equation}
In the following let $\beta\in (0,1]$ and let $Dh$ be $\beta$-H\"older continuous, i.e. there is a constant $H_\beta>0$ such that
\begin{equation}
\norm{Dh(x)-Dh(y)} \leq H_\beta \normz{x-y}^\beta
\end{equation}
for all $x,y\in \mathbb{H}_{\geq M} $. Moreover, we denote by $L_h$ the Lipschitz constant of $h$. 

We want to use this to prove the following Lemma:
\begin{lem} \label{estimate_I_2}
	If $h$ is as in Theorem \ref{thm1}, then there is a constant $c_7>0$ such that for all $x,y\in \mathbb{H}_{\geq M}$ 
	\begin{equation}
	\norm{DF(\La(x))-DF(\La(y))} \leq c_7\min\left\{\normz{x}^{1-\beta},\normz{y}^{1-\beta}\right\}  \cdot \max\left\{\normz{x-y},\normz{x-y}^\beta\right\}. 
	\end{equation}
\end{lem}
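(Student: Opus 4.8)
The plan is to estimate $\norm{DF(\La(x))-DF(\La(y))}$ by passing through the explicit formula $F(z)=e^{z_d}h(z_1,\dots,z_{d-1})$, so that $DF$ is built from $e^{z_d}$, $h$, and $Dh$ evaluated at the first $d-1$ coordinates. First I would use \eqref{eq:oper_max_entry} to reduce the operator-norm estimate to bounding the entries $\abs{DF_{jk}(\La(x))-DF_{jk}(\La(y))}$. Each such entry is, up to the factor $e^{z_d}$, either a component $h_i$ or a partial derivative $\partial_j h_i$ of $h$, evaluated at the projection $\pi(\La(x))$ of $\La(x)$ onto its first $d-1$ coordinates (here I write $\pi(z)=(z_1,\dots,z_{d-1})$). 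So the difference splits, via a standard ``$ab-a'b'=a(b-b')+(a-a')b'$'' manipulation, into a term controlled by $\abs{e^{\La_d(x)}-e^{\La_d(y)}}$ times a bounded quantity (since $h$ is Lipschitz with constant $L_h$, its components and first derivatives are bounded on $\interior(Q)$), plus a term controlled by $e^{\La_d(x)}$ times $\abs{h_i(\pi(\La(x)))-h_i(\pi(\La(y)))}$ or $\abs{\partial_j h_i(\pi(\La(x)))-\partial_j h_i(\pi(\La(y)))}$.

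Next I would bound each piece. By \eqref{eq:La_d} we have $e^{\La_d(x)}=\normz{x+(0,\dots,0,a)}$, which is comparable to $\normz{x}$ for $x\in\mathbb{H}_{\geq M}$; hence $e^{\La_d(x)}$ and $e^{\La_d(y)}$ are both comparable to $\min\{\normz{x},\normz{y}\}$ up to the difference $\normz{x-y}$, and in particular $\abs{e^{\La_d(x)}-e^{\La_d(y)}}\leq\normz{x-y}$ by the reverse triangle inequality; this already yields the factor $\normz{x-y}$ (hence also $\min\{\normz{x}^{1-\beta},\normz{y}^{1-\beta}\}$ is not even needed for that summand once one notes $1-\beta\le$ the relevant exponents — more carefully, one absorbs everything into the stated $\max$/$\min$ form). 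For the terms involving differences of $h$ or $Dh$, the Lipschitz property of $h$ gives $\abs{h_i(\pi(\La(x)))-h_i(\pi(\La(y)))}\le L_h\normz{\pi(\La(x))-\pi(\La(y))}\le L_h\normz{\La(x)-\La(y)}$, and the $\beta$-Hölder hypothesis gives $\abs{\partial_j h_i(\pi(\La(x)))-\partial_j h_i(\pi(\La(y)))}\le H_\beta\normz{\La(x)-\La(y)}^\beta$. Finally I would invoke \eqref{eq:c_4/min}, namely $\normz{\La(x)-\La(y)}\le c_4\pi\,\normz{x-y}/\min\{\normz{x},\normz{y}\}$, so that $e^{\La_d(x)}\cdot\normz{\La(x)-\La(y)}^\beta$ becomes (using $e^{\La_d(x)}\lesssim\normz{x}$, and symmetrically $\lesssim\min\{\normz{x},\normz{y}\}$ after another application of the triangle inequality or by simply taking the min explicitly) of order $\min\{\normz{x},\normz{y}\}^{1-\beta}\cdot\normz{x-y}^\beta$, and the Lipschitz term similarly of order $\normz{x-y}$ times a bounded constant. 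Collecting all contributions and replacing $\normz{x-y}$, $\normz{x-y}^\beta$ by their maximum, and $\normz{x}$-type prefactors by $\min\{\normz{x}^{1-\beta},\normz{y}^{1-\beta}\}$ (legitimate since the exponents $0$ and $1$ are both controlled by the presence of the $\max$ on the right, after splitting into the regimes $\normz{x-y}\le 1$ and $\normz{x-y}\ge1$), gives the claimed inequality with a suitable $c_7$ depending on $L_h$, $H_\beta$, $c_4$, $C$, $d$, $M$, $a$.

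The main obstacle I expect is bookkeeping rather than any deep difficulty: one must be careful that $\pi(\La(x))\in\interior(Q)$ so that the $C^1$ and Hölder hypotheses on $h$ actually apply (this holds because $\La$ maps into the tract $T(0)$, whose projection is $P(0)=\interior(Q)$), and one must handle the two scaling regimes $\normz{x-y}\lessgtr 1$ so that the single power $\normz{x-y}^\beta$ vs.\ $\normz{x-y}$ can be uniformly dominated by $\max\{\normz{x-y},\normz{x-y}^\beta\}$, and likewise the prefactor $\min\{\normz{x},\normz{y}\}$ (which is $\geq$ a constant, since $\normz{x},\normz{y}\ge M$) vs.\ $\min\{\normz{x},\normz{y}\}^{1-\beta}$ — here one uses that $1-\beta\le 1$ together with $\min\{\normz{x},\normz{y}\}\ge M\ge 1$. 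Once these regimes are organized, every individual estimate is an immediate consequence of the Lipschitz/Hölder bounds on $h$, the identity \eqref{eq:La_d}, and the contraction-type estimate \eqref{eq:c_4/min}.
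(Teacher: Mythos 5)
Your proposal is correct and follows essentially the same route as the paper's proof: reduce to entry-wise bounds via \eqref{eq:oper_max_entry}, split $e^{x_d}DF_{jk}(\tilde x,0)-e^{y_d}DF_{jk}(\tilde y,0)$ with the product rule, bound the pieces using that $h$ is Lipschitz and $Dh$ is $\beta$-H\"older, then substitute $\Lambda(x),\Lambda(y)$ and invoke \eqref{eq:La_d} and \eqref{eq:c_4/min}. The one step you gloss over is that the exponential factor multiplying the Lipschitz/H\"older difference of $h$ must be the \emph{smaller} of $e^{\Lambda_d(x)}$ and $e^{\Lambda_d(y)}$ — the paper arranges this by assuming WLOG $y_d\le x_d$ before splitting — since $e^{\Lambda_d(x)}/\min\{\normz{x}^{\beta},\normz{y}^{\beta}\}$ is not in general controlled by $\min\{\normz{x}^{1-\beta},\normz{y}^{1-\beta}\}$, whereas $(\min\{\normz{x},\normz{y}\}+a)/\min\{\normz{x}^{\beta},\normz{y}^{\beta}\}$ is.
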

\begin{proof}
	We have for $x=(\tilde{x},x_d),y=(\tilde{y},y_d)\in \mathbb{H}_{\geq M}$ using \protect\eqref{eq:oper_max_entry} 
	\begin{align}
	\norm{DF(x)-DF(y)}  &\leq C \max_{\substack{1\leq j\leq d \\ 1\leq k \leq d}}  \abs{DF_{jk}(x)-DF_{jk}(y)} \\
	&\leq C \max_{\substack{1\leq j\leq d \\ 1\leq k \leq d}}  \abs{e^{x_d}DF_{jk}(\tilde{x},0)-e^{y_d}DF_{jk}(\tilde{y},0)}.
	\end{align}
	Because $\abs{DF_{jk}(x)-DF_{jk}(y)}$ is symmetric in $x$ and $y$, we assume without loss of generality that $y_d \leq x_d$. Then we obtain
	\begin{align}
	&\hspace{0.49cm}\abs{e^{x_d}DF_{jk}(\tilde{x},0)-e^{y_d}DF_{jk}(\tilde{y},0)} \\
	&\leq e^{y_d}\abs{DF_{jk}(\tilde{x},0)-DF_{jk}(\tilde{y},0)}+(e^{x_d}-e^{y_d})\cdot\max\{\abs{DF_{jk}(\tilde{x},0)},\abs{DF_{jk}(\tilde{y},0)}\} .
	\end{align}
	Since $h$ is as in Theorem \ref{thm1} and for all $z\in\R^d$ and $j,k \in \{1,\dots,d\}$
	\begin{equation}
	DF_{jk}(\tilde{z},0) =
	\begin{dcases}
	\frac{\partial}{\partial x_k} h_j(\tilde{z}), & \text{if } k\leq d-1, \\
	\quad h_j(\tilde{z}), & \text{if } k=d, \\
	\end{dcases}
	\end{equation}
	we have with $\tilde{C}=\max\{L_h,H_\beta\}\geq 1$, noting that $\abs{\frac{\partial}{\partial x_k} h_j(\tilde{z})}\leq L_h$,
	\begin{align}
	\norm{DF(x)-DF(y)} &\leq C \left(\tilde{C}e^{y_d}\cdot\max\left\{\normz{\tilde{x}-\tilde{y}},\normz{\tilde{x}-\tilde{y}}^\beta\right\}+L_h\cdot (e^{x_d}-e^{y_d}) \right) \\
	&\leq C\tilde{C} \left(e^{y_d}\cdot\max\left\{\normz{x-y},\normz{x-y}^\beta\right\}+(e^{x_d}-e^{y_d}) \right).
	\end{align}
	Using the fact that $\min\{\normz{x},\normz{y}\}\geq M\geq 1$ and \eqref{eq:c_4/min} we obtain
	\begin{equation}
	\begin{aligned}
	\tilde{M}&:=\max\left\{\normz{\La(x)-\La(y)},\normz{\La(x)-\La(y)}^\beta\right\}  \\
	&\leq \max\left\{c_4\pi\dfrac{\normz{x-y}}{\min\{\normz{x},\normz{y}\}},c_4^\beta\pi^\beta\dfrac{\normz{x-y}^\beta}{\min\{\normz{x}^\beta,\normz{y}^\beta\}} \right\} \\
	&\leq c_4\pi\dfrac{\max\left\{\normz{x-y},\normz{x-y}^\beta\right\}}{\min\left\{\normz{x}^\beta,\normz{y}^\beta\right\}}.
	\end{aligned}
	\end{equation}
	This yields together with \protect\eqref{eq:c_4/min}, \protect \eqref{eq:La_d}, $\normz{x+(0,\dots,0,a)}\leq \normz{x}+a$ and replacing $x$ and $y$ by $\La(x)$ and $\La(y)$
	\begin{equation}
	\begin{aligned}
	&\hspace{0.49cm}\norm{DF(\La(x))-DF(\La(y))}  \\
	&\leq C\tilde{C} \left(\left(\min\{\normz{x},\normz{y}\}+a\right)\cdot\tilde{M}+\abs{\normz{x}-\normz{y}}   \right) \\
	&\leq C\tilde{C} \left(c_4\pi \dfrac{\min\left\{\normz{x},\normz{y}\right\}+a}{\min\left\{\normz{x}^\beta,\normz{y}^\beta\right\}}+1\right) \cdot  \max\left\{\normz{x-y},\normz{x-y}^\beta\right\} \\
	&\leq C\tilde{C}c_4\pi \left(\dfrac{\min\left\{\normz{x},\normz{y}\right\}+a}{\min\left\{\normz{x}^\beta,\normz{y}^\beta\right\}}+1\right) \cdot  \max\left\{\normz{x-y},\normz{x-y}^\beta\right\}.  
	\end{aligned}
	\end{equation}
	Since for $x,y\in \mathbb{H}_{\geq M}$ we have $\min\{\normz{x},\normz{y}\}\geq \min\left\{\normz{x}^\beta,\normz{y}^\beta\right\}\geq 1$ and
	\begin{align}
	\dfrac{\min\left\{\normz{x},\normz{y}\right\}+a}{\min\left\{\normz{x}^\beta,\normz{y}^\beta\right\}}+1 &= \dfrac{\min\left\{\normz{x},\normz{y}\right\}+a+\min\left\{\normz{x}^\beta,\normz{y}^\beta\right\}}{\min\left\{\normz{x}^\beta,\normz{y}^\beta\right\}} \\
	&\leq (2+a)\cdot \min\left\{\normz{x}^{1-\beta},\normz{y}^{1-\beta}\right\},
	\end{align}
	there exists $c_7>0$ such that we have for all $x,y\in \mathbb{H}_{\geq M}$
	\begin{equation}
	\begin{aligned}
	&\hspace{0.49cm}\norm{DF(\La(x))-DF(\La(y))} \\
	&\leq c_7\min\left\{\normz{x}^{1-\beta},\normz{y}^{1-\beta}\right\}  \cdot \max\left\{\normz{x-y},\normz{x-y}^\beta\right\}, \label{eq:DF(Lx)-DF(Ly)}
	\end{aligned}
	\end{equation}
	which proves the Lemma.
\end{proof}
\begin{lem}\label{estimate_1}
	For all $k\in\N$ and $t\in (0,\infty)$ we have 
	\begin{equation}
	\normz{\dfrac{d}{dt}(L_{k-1}\circ \phi_k)(t)-\dfrac{d}{dt}(L_{k-1}(0,\dots,0,E^k(t)+M))} \leq \big(I_{1,k}(t)+I_{2,k}(t)\big) \cdot (E^k)'(t)
	\end{equation}
	with
	\begin{equation}
	I_{1,k}(t) := \frac{ac_4}{E^{k+1}(t)\cdot E^k(t)}
	\end{equation}
	and
	\begin{equation}
	I_{2,k}(t) := c_8 \cdot \dfrac{2\normz{s_k}+1}{E^k(t)^{1+\beta}}, \label{eq:I_2,k}
	\end{equation}
	where $c_8:= c_4^2c_7(d+\log(M+a)+M)$.
\end{lem}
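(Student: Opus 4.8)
The plan is to differentiate both curves by the chain rule, split the difference into the part coming from $\phi_k'$ deviating from the ``idealized'' velocity $\big(0,\dots,0,(E^k)'(t)\big)^T$ and the part coming from $D\La$ being evaluated at $\phi_k(t)$ instead of at $q_k:=(0,\dots,0,E^k(t)+M)$, and to bound the former by $I_{1,k}(t)(E^k)'(t)$ and the latter by $I_{2,k}(t)(E^k)'(t)$. Using \eqref{eq:deriv} and the $C^1$-hypothesis on $h$, the two derivatives are $\frac{d}{dt}(L_{k-1}\circ\phi_k)(t)=D\La(\phi_k(t))\phi_k'(t)$ and $\frac{d}{dt}L_{k-1}(q_k)=D\La(q_k)\big(0,\dots,0,(E^k)'(t)\big)^T$. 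Inserting $D\La(q_k)\phi_k'(t)$ and using the triangle inequality, the quantity to estimate is at most
\[
\norm{D\La(q_k)}\cdot\normz{\phi_k'(t)-\big(0,\dots,0,(E^k)'(t)\big)^T}+\norm{D\La(\phi_k(t))-D\La(q_k)}\cdot\normz{\phi_k'(t)}.
\]

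For the first summand I would use the explicit formula \eqref{eq:phi1} together with $E'(E^k(t))=e^{E^k(t)}=E^{k+1}(t)+1$, giving $\phi_k'(t)-\big(0,\dots,0,(E^k)'(t)\big)^T=\big(0,\dots,0,-\tfrac{M+a-1}{E^{k+1}(t)+M+a}(E^k)'(t)\big)^T$; combined with $\norm{D\La(q_k)}\le c_4/\normz{q_k}=c_4/(E^k(t)+M)$ from \eqref{eq:inverse} and the size bounds in \eqref{eq:a>1}, this yields the bound $I_{1,k}(t)(E^k)'(t)$.

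The core of the proof is the second summand. Since $\normz{\phi_k'(t)}\le(E^k)'(t)$ by \eqref{eq:phi_k' < E^k'}, it suffices to show $\norm{D\La(\phi_k(t))-D\La(q_k)}\le I_{2,k}(t)$. I would write $D\La(x)=(DF(\La(x)))^{-1}$ (valid since $\La$ is a branch of $f^{-1}$ and $Df=DF$) and use the identity $A^{-1}-B^{-1}=A^{-1}(B-A)B^{-1}$ to obtain
\[
D\La(\phi_k(t))-D\La(q_k)=D\La(\phi_k(t))\big(DF(\La(q_k))-DF(\La(\phi_k(t)))\big)D\La(q_k).
\]
The two outer factors are controlled by \eqref{eq:inverse} and the middle one by \eqref{eq:DF(Lx)-DF(Ly)} of Lemma \ref{estimate_I_2}, applied with $x=\phi_k(t)$ and $y=q_k$; both lie in $\mathbb{H}_{\geq M}$, which for $\phi_k(t)$ amounts to $\log(M+a)>M$ and hence follows from \eqref{eq:a} and $m<M$. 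The required size inputs are supplied by \eqref{eq:log Ek+1 = Ek+Rk}: on the one hand $\normz{\phi_k(t)}\ge\log(E^{k+1}(t)+M+a)>E^k(t)$ and $\normz{q_k}=E^k(t)+M>E^k(t)$, so that $\normz{q_k}^{1-\beta}/(\normz{\phi_k(t)}\normz{q_k})\le 1/E^k(t)^{1+\beta}$ and the factor $\min\{\normz{\phi_k(t)},\normz{q_k}\}^{1-\beta}$ from \eqref{eq:DF(Lx)-DF(Ly)} gets absorbed; on the other hand the first $d-1$ coordinates of $\phi_k(t)$ equal $v_j+2s_{k,j}$ (with $v$ as in \eqref{eq:v}) and its $d$-th coordinate differs from $E^k(t)+M$ by at most $\log(M+a)$ in modulus, so $\normz{\phi_k(t)-q_k}\le\sqrt{d-1}+2\normz{s_k}+\log(M+a)$ and hence $\max\{\normz{\phi_k(t)-q_k},\normz{\phi_k(t)-q_k}^\beta\}\le 1+\normz{\phi_k(t)-q_k}\le(d+M+\log(M+a))(2\normz{s_k}+1)$. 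Multiplying the three factors and collecting the constants into $c_8=c_4^2c_7(d+\log(M+a)+M)$ gives exactly $\norm{D\La(\phi_k(t))-D\La(q_k)}\le I_{2,k}(t)$.

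The main obstacle is this second summand, and within it the exponent bookkeeping: the two factors $1/\normz{\cdot}$ from \eqref{eq:inverse} and the mixed $\min/\max$ H\"older estimate of Lemma \ref{estimate_I_2} must interlock so that the $(1-\beta)$-power of $\min\{\normz{\phi_k(t)},\normz{q_k}\}$ cancels exactly one of the two powers of $1/E^k(t)$, producing the clean denominator $E^k(t)^{1+\beta}$; in addition one must check that $\phi_k(t)$ and $q_k$ both lie in $\mathbb{H}_{\geq M}$, the domain on which \eqref{eq:inverse} and Lemma \ref{estimate_I_2} are both available, which is where the standing hypothesis \eqref{eq:a} enters. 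Everything else reduces to elementary estimates for the reference function $E$ via \eqref{eq:log Ek+1 = Ek+Rk}.
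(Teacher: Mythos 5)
Your proposal is correct and follows essentially the same route as the paper: differentiate by the chain rule, split the difference via the triangle inequality, bound the ``velocity'' term using \eqref{eq:phi1} and \eqref{eq:inverse} to get $I_{1,k}$, and bound the ``evaluation point'' term using the resolvent identity together with \eqref{eq:inverse} and Lemma~\ref{estimate_I_2} to get $I_{2,k}$. The only (immaterial) difference is the choice of cross term inserted in the decomposition: you add and subtract $D\La(q_k)\phi_k'(t)$, obtaining $\norm{D\La(q_k)}\normz{\phi_k'-u}+\norm{D\La(\phi_k)-D\La(q_k)}\normz{\phi_k'}$, whereas the paper inserts $D\La(\phi_k(t))\,u$ with $u=(0,\dots,0,(E^k)'(t))^T$, obtaining $\norm{D\La(\phi_k)}\normz{\phi_k'-u}+\norm{D\La(\phi_k)-D\La(q_k)}\normz{u}$; both groupings produce the stated constants since $\normz{\phi_k'}\le\normz{u}$ and $\normz{q_k},\normz{\phi_k(t)}\ge E^k(t)$.
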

\begin{proof}
	By the triangle inequality we have
	\begin{equation}
	\begin{aligned}
	&\hspace{0.49cm}\normz{\dfrac{d}{dt}(L_{k-1}\circ \phi_k)(t)-\dfrac{d}{dt}(L_{k-1}(0,\dots,0,E^k(t)+M))} \\
	&= \normz{D\Lambda(\phi_k(t))\cdot \phi'_k(t)-D\Lambda(0,\dots,0,E^k(t)+M)\cdot (0,\dots,0,(E^k)'(t))^T} \\
	&= \big\lVert D\Lambda(\phi_k(t))\cdot \left(\phi'_k(t)-(0,\dots,0,(E^k)'(t))^T\right)  \\
	&\quad +\left(D\Lambda(\phi_k(t))-D\Lambda(0,\dots,0,E^k(t)+M)\right)\cdot (0,\dots,0,(E^k)'(t))^T \big\rVert_2 \\
	&\leq \norm{D\Lambda(\phi_k(t))}\cdot \normz{\phi'_k(t)-(0,\dots,0,(E^k)'(t))^T} \\
	&\quad +\norm{D\Lambda(\phi_k(t))-D\Lambda(0,\dots,0,E^k(t)+M)}\cdot \normz{(0,\dots,0,(E^k)'(t))} \label{eq:d/dt}
	\end{aligned}
	\end{equation}
	We estimate the first part on the right hand side. Since, by \protect \eqref{eq:phi1}
	\begin{equation}
	\begin{aligned}
	\normz{\phi'_k(t)-(0,\dots,0,(E^k)'(t))^T} &= \normz{\left(0,\dots,0,\left(\frac{E'(E^k(t))}{E^{k+1}(t)+M+a}-1\right)\cdot (E^k)'(t)\right)}\\
	&=\left(1-\frac{E^{k+1}(t)+1}{E^{k+1}(t)+M+a} \right)\cdot (E^k)'(t) \\
	&= \frac{M+a-1}{E^{k+1}(t)+M+a}\cdot (E^k)'(t) \\
	&\leq \frac{a}{E^{k+1}(t)}\cdot (E^k)'(t),
	\end{aligned}
	\end{equation}
	and, by \eqref{eq:inverse} and \eqref{eq:log Ek+1 = Ek+Rk}
	\begin{equation}
	\begin{aligned}
	\norm{D\La(\phi_k(t)} &\leq \dfrac{c_4}{\normz{\phi_k(t)}} \\
	&\leq \dfrac{c_4}{\phi_{k,d}(t)} \\
	&= \dfrac{c_4}{\log\left(E^{k+1}(t)+M+a\right)} \\
	&\leq \dfrac{c_4}{E^k(t)}. \label{eq:d/dt2}
	\end{aligned}
	\end{equation}
	
	Now we estimate the second part on the right hand side. Since the estimates in \protect \eqref{eq:DF(Lx)-DF(Ly)} are symmetric in $x$ and $y$ we may assume without loss of generality that $\normz{x}\leq \normz{y}$.
	
	Using the fact that for all $A,B \in GL(d,\R)$ 
	\begin{equation}
	A^{-1}-B^{-1} = B^{-1}\cdot (B-A)\cdot A^{-1}
	\end{equation}
	and for all $x\in\mathbb{H}_{\geq M}$
	\begin{equation}
	D\La(x) \cdot DF(\La(x)) = I,
	\end{equation}
	we obtain from Lemma \ref{estimate_I_2} for all $x,y\in\mathbb{H}_{\geq M}$ 
	\begin{align}
	&\hspace{0.49cm}\norm{D\La(x)-D\La(y)} \\
	&=\norm{DF(\La(x))^{-1}-DF(\La(y))^{-1}} \\
	&= \norm{DF(\La(y))^{-1}\cdot(DF(\La(y))-DF(\La(x)))\cdot DF(\La(x))^{-1}} \\
	&\leq \norm{DF(\La(y))^{-1}}\cdot \norm{DF(\La(y))-DF(\La(x))}\cdot \norm{DF(\La(x))^{-1}} \\
	&\leq c_7\min\left\{\normz{x}^{1-\beta},\normz{y}^{1-\beta}\right\}  \cdot \norm{D\La(x)} \cdot \norm{D\La(y)} \cdot \max\left\{\normz{x-y},\normz{x-y}^\beta\right\} \\
	&= c_7\normz{x}^{1-\beta} \cdot \norm{D\La(x)} \cdot \norm{D\La(y)} \cdot \max\left\{\normz{x-y},\normz{x-y}^\beta\right\} 
	\end{align}
	With inequality \eqref{eq:inverse} this yields for all $x,y\in\mathbb{H}_{\geq M}$
	\begin{equation}
	\begin{aligned}
	\norm{D\La(x)-D\La(y)} &\leq c_4^2c_7\normz{x}^{1-\beta} \cdot \dfrac{\max\left\{\normz{x-y},\normz{x-y}^\beta\right\}}{\normz{x}\cdot \normz{y}} \\
	&=c_4^2c_7\cdot  \dfrac{\max\left\{\normz{x-y},\normz{x-y}^\beta\right\}}{\normz{x}^\beta\cdot \normz{y}}. 	 \label{eq:A^-1 - B^-1}
	\end{aligned}
	\end{equation}
	Notice that we obtain from \protect \eqref{eq:E^l(t)} for all $k\in\N$ 
	\begin{equation}
	\dfrac{1}{\normz{\phi_k(t)}^\beta\cdot \left(E^k(t)+M\right)} \leq \dfrac{1}{E^k(t)^{1+\beta}}.
	\end{equation}
	Recall that by \protect \eqref{eq:log Ek+1 = Ek+Rk} we have  $\log(E^{k+1}(t)+M+a)\leq E^k(t)+\log(M+a)$ for all $k\in\N$.  With 
	\begin{equation}
	c_8:= c_4^2c_7(d+\log(M+a)+M)
	\end{equation}
	and putting $x:=\phi_k(t)=(v_1+2s_{k,1},\dots,v_{d-1}+2s_{k,d-1},\log(E^{k+1}(t)+M+a))$ and $y:=(0,\dots,0,E^k(t)+M)$ we obtain
	\begin{equation}
	\begin{aligned}
	&\hspace{0.49cm}\norm{D\Lambda(\phi_k(t))-D\Lambda(0,\dots,0,E^k(t)+M)} \\ 
	&\leq c_4^2c_7\cdot \dfrac{\max\left\{\normz{\left(v+2s_k,\log(M+a)+M\right)},\normz{(v+2s_k,\log(M+a)+M)}^\beta\right\}}{E^k(t)^{1+\beta}}  \\
	&\leq c_4^2c_7\cdot \dfrac{\max\left\{d+2\normz{s_k}+\log(M+a)+M,\left(d+2\normz{s_k}+\log(M+a)+M\right)^\beta\right\}}{E^k(t)^{1+\beta}}\\
	&\leq c_4^2c_7 \cdot \dfrac{d+2\normz{s_k}+\log(M+a)+M}{E^k(t)^{1+\beta}}\\
	&\leq c_8 \cdot \dfrac{2\normz{s_k}+1}{E^k(t)^{1+\beta}}
	\end{aligned}
	\end{equation}
	for all $k\in\N$. Together this yields
	\begin{equation}
	\begin{aligned}
	&\hspace{0.49cm}\norm{D\Lambda(\phi_k(t))-D\Lambda(0,\dots,0,E^k(t)+M)}\cdot \normz{(0,\dots,0,(E^k)'(t))} \\
	&\leq c_8 \cdot \dfrac{2\normz{s_k}+1}{E^k(t)^{1+\beta}}\cdot (E^k)'(t) \\
	&=I_{2,k}(t)\cdot (E^k)'(t). \label{eq:d/dt3}
	\end{aligned}
	\end{equation}
	Finally we obtain the conclusion from \protect \eqref{eq:d/dt}, \protect \eqref{eq:d/dt2} and \protect \eqref{eq:d/dt3}.
\end{proof}
From \protect \eqref{eq:g_k'} we know that 
\begin{equation}
\begin{aligned}
g_{k-1}'(t) &= \prod_{l=1}^{k}D\La\left(\left(\Comp_{j=l}^{k-1}L_j\right)\left(0,\dots,0,E^{k}(t)+M\right)\right)\cdot \left(0,\dots,0,\left(E^k\right)'(t)\right)^T \\
&= \prod_{l=1}^{k-1}D\La\left(\left(\Comp_{j=l}^{k-1}L_j\right)\left(0,\dots,0,E^{k}(t)+M\right)\right)\cdot \dfrac{d}{dt}(L_{k-1}(0,\dots,0,E^k(t)+M))
\end{aligned}
\end{equation}
for all $k\in\N$. Moreover we obtain with \eqref{eq:phi_k} and \protect \eqref{eq:phi2} 
\begin{align}
g_k'(t)&=\prod_{l=1}^{k+1}\left(D\La\left(\left(\Comp_{j=l}^{k}L_j\right)\left(0,\dots,0,E^{k+1}(t)+M\right) \right)  \right)\cdot \left(0,\dots,0,\left(E^{k+1}\right)'(t)\right)^T \\
&= \prod_{l=1}^{k-1}\left(D\La\left(\left(\Comp_{j=l}^{k-1}L_j\right)(\phi_k(t)) \right)  \right)\cdot D\La\left(L_k\left(0,\dots,0,E^{k+1}(t)+M\right)\right) \\
&\hspace{1cm}\cdot D\La\left(0,\dots,0,E^{k+1}(t)+M\right) \cdot \left(0,\dots,0,\left(E^{k+1}\right)'(t)\right)^T \\
&= \prod_{l=1}^{k-1}D\La\left(\left(\Comp_{j=l}^{k-1}L_j\right)(\phi_k(t))\right)\cdot D\La\left(\phi_k(t)\right)\cdot \phi_k'(t) \\
&= \prod_{l=1}^{k-1}D\La\left(\left(\Comp_{j=l}^{k-1}L_j\right)(\phi_k(t))\right)\cdot \dfrac{d}{dt}(L_{k-1}\circ \phi_k)(t).
\end{align} 
Putting
\begin{align}
A_k(t) &:= \prod_{l=1}^{k-1}D\La\left(\left(\Comp_{j=l}^{k-1}L_j\right)(\phi_k(t))\right) \\
B_k(t) &:= \prod_{l=1}^{k-1}D\La\left(\left(\Comp_{j=l}^{k-1}L_j\right)(0,\dots,0,E^{k}(t)+M)\right)
\end{align}
for all $k\in\N$ and $t\in [0,\infty)$, we obtain
\begin{equation}
\begin{aligned}
&\hspace{0.49cm}g_k'(t)-g_{k-1}'(t) \\
&= A_k(t)\cdot  \dfrac{d}{dt}(L_{k-1}\circ \phi_k)(t) 
- B_k(t) \cdot \dfrac{d}{dt}(L_{k-1}(0,\dots,0,E^k(t)+M)). \label{eq:g_k'-g_k-1'}
\end{aligned}
\end{equation}
At this point we need suitable estimates for $A_k(t)$ and $B_k(t)$.
\begin{lem} \label{A_k-B_k}
	For all $k\in\N_{\geq 2}$ and $t\in [0,\infty)$ we have
	\begin{equation}
	\begin{aligned}
	&\hspace{0.49cm}\norm{A_k(t)-B_k(t)} \\
	&\leq \sum_{r=1}^{k-1} \left(\prod_{l=1}^{r-1} \norm{X_{l,k-1}(t)}\cdot \norm{X_{r,k-1}(t)-Y_{r,k-1}(t)}\cdot \prod_{s=r+1}^{k-1} \norm{Y_{s,k-1}(t)}\right), \label{eq:A_k-B_k}
	\end{aligned}
	\end{equation}
	where for $l\in\{0,\dots,k-1\}$
	\begin{align}
	X_{l,k-1}(t) &:= D\La\left(\left(\Comp_{j=l}^{k-1}L_j\right)(\phi_k(t))\right), \\
	Y_{l,k-1}(t) &:= D\La\left(\left(\Comp_{j=l}^{k-1}L_j\right)(0,\dots,0,E^{k}(t)+M)\right).
	\end{align}
\end{lem}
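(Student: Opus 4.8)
The plan is to recognise Lemma~\ref{A_k-B_k} as nothing more than the standard telescoping identity for the difference of two \emph{ordered} products of linear maps, followed by an application of the operator norm and submultiplicativity. Abbreviate $X_l := X_{l,k-1}(t)$ and $Y_l := Y_{l,k-1}(t)$ for $l\in\{1,\dots,k-1\}$; by the formulas for $g_k'$ and $g_{k-1}'$ established just above, we have $A_k(t) = X_1 X_2 \cdots X_{k-1}$ and $B_k(t) = Y_1 Y_2 \cdots Y_{k-1}$, with the factors ordered from left to right exactly as they arise from the chain rule. The point to keep fixed throughout is precisely this ordering.

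First I would introduce the interpolating products $P_r := X_1 \cdots X_r\, Y_{r+1} \cdots Y_{k-1}$ for $r\in\{0,1,\dots,k-1\}$, adopting the convention (consistent with $\Comp_{j=m+1}^{m} f_j = \id$) that an empty product of matrices equals the identity, so that $P_0 = B_k(t)$ and $P_{k-1} = A_k(t)$. Then I would write $A_k(t)-B_k(t) = P_{k-1}-P_0 = \sum_{r=1}^{k-1}(P_r-P_{r-1})$ and observe that $P_r$ and $P_{r-1}$ differ only in their $r$-th factor, whence $P_r-P_{r-1} = (X_1\cdots X_{r-1})(X_r-Y_r)(Y_{r+1}\cdots Y_{k-1})$. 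Taking the operator norm, applying the triangle inequality to the sum over $r$, and using submultiplicativity $\norm{AB}\leq\norm{A}\,\norm{B}$ separately on the three blocks $X_1\cdots X_{r-1}$, $X_r-Y_r$ and $Y_{r+1}\cdots Y_{k-1}$, yields exactly \eqref{eq:A_k-B_k}; the boundary cases $r=1$ and $r=k-1$, where one of the two products is empty, are covered by the empty-product convention.

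I do not expect any genuine obstacle in this lemma: the only care needed is to preserve the left-to-right order of the chain-rule product (so that the telescoping is legitimate) and to treat the empty products at the endpoints of the sum consistently. The substantive work — bounding the factors $\norm{X_l}$, $\norm{Y_s}$ by means of \eqref{eq:inverse} together with the lower bounds of Lemma~\ref{Rschlange} and Remark~\ref{doubleschlange}, and bounding $\norm{X_r-Y_r}$ via the Hölder-type estimate for $D\La$ coming from Lemma~\ref{estimate_I_2} and \eqref{eq:I_2,k} — is deferred to the estimates that follow and invoke this lemma, where the point is to show that the resulting sum is summable in $k$ locally uniformly on $(t_{\ul{s}},\infty)$.
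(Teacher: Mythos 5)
Your proof is correct and is essentially the same argument as the paper's: both are the standard telescoping decomposition of a difference of ordered products, followed by the triangle inequality and submultiplicativity of the operator norm. The paper peels off one factor at a time inductively, while you write the full telescoping sum via interpolating products $P_r$ in one step, but this is only a presentational difference.
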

\begin{proof}
	By the triangle inequality and the submultiplicity of the operator norm we obtain
	\begin{align}
	\norm{A_k(t)-B_k(t)} &= \norm{\prod_{l=1}^{k-1}X_{l,k-1}(t)-\prod_{l=1}^{k-1}Y_{l,k-1}(t)} \\
	&= \Bigg\lVert\prod_{l=1}^{k-1}X_{l,k-1}(t)-\left(\prod_{l=1}^{k-2}X_{l,k-1}(t)\right)\cdot Y_{k-1,k-1}(t)   \\
	&\hspace{1cm}+ \left(\prod_{l=1}^{k-2}X_{l,k-1}(t)\right)\cdot Y_{k-1,k-1}(t)-\prod_{l=1}^{k-1}Y_{l,k-1}(t) \Bigg\rVert \\
	&\leq  \prod_{l=1}^{k-2}\norm{X_{l,k-1}(t)}\cdot \norm{X_{k-1,k-1}(t)-Y_{k-1,k-1}(t)} \\
	&\hspace{1cm}+ \norm{\prod_{l=1}^{k-2}X_{l,k-1}(t)-\prod_{l=1}^{k-2}Y_{l,k-1}(t)}\cdot \norm{Y_{k-1,k-1}(t)}.
	\end{align}
	Repeating this procedure we obtain our result.
\end{proof}
\begin{proof}[Proof of Theorem \ref{thm1}.] Let $\varepsilon >0$. We will show that there is a constant $C_1>0$ such that
	\begin{equation}
	\normz{g'_k(t)-g'_{k-1}(t)} \leq C_1\alpha^{k-1}
	\end{equation}
	for $t\in[t_{\ul{s}}+\varepsilon,\infty)$ and large $k\in\N$, where $\alpha$ is the constant given in \eqref{eq:alpha}. This implies that $(g_k')_k$ converges locally uniformly on $(t_{\ul{s}},\infty)$ and thus the hairs of $f$ are $C^1$-smooth which yields Theorem \ref{THM}. \\
	First of all we deduce from \protect \eqref{eq:g_k'-g_k-1'} that 
	\begin{equation}
	\begin{aligned}
	&\hspace{0.49cm}\normz{g'_k(t)-g'_{k-1}(t)} \\
	&= \normz{A_k(t)\cdot  \dfrac{d}{dt}(L_{k-1}\circ \phi_k)(t) 
		- B_k(t) \cdot \dfrac{d}{dt}(L_{k-1}(0,\dots,0,E^k(t)+M))} \\
	&= \Bigg\lVert A_k(t)\cdot \dfrac{d}{dt}(L_{k-1}\circ \phi_k)(t)-A_k(t)\cdot \dfrac{d}{dt}(L_{k-1}(0,\dots,0,E^k(t)+M)) \\
	&\hspace{0.5cm} +\left(A_k(t)-B_k(t)\right)\cdot \dfrac{d}{dt}(L_{k-1}(0,\dots,0,E^k(t)+M))\Bigg\rVert_2 \\
	&\leq \norm{A_k(t)}\cdot \normz{\dfrac{d}{dt}(L_{k-1}\circ \phi_k)(t)-\dfrac{d}{dt}(L_{k-1}(0,\dots,0,E^k(t)+M))} \\
	&\hspace{0.5cm}+\norm{A_k(t)-B_k(t)}\cdot \normz{\dfrac{d}{dt}(L_{k-1}(0,\dots,0,E^k(t)+M))}.
	\end{aligned}
	\end{equation}
	With inequality \protect \eqref{eq:phi_k' < E^k'} we obtain 
	\begin{equation}
	\normz{\dfrac{d}{dt}(L_{k-1}(0,\dots,0,E^k(t)+M))} = \normz{\phi_{k-1}'(t)} \leq \left(E^{k-1}\right)'(t).
	\end{equation}
	Thus using Lemma \ref{estimate_1} we obtain
	\begin{equation}
	\begin{aligned}
	&\hspace{0.49cm}\normz{g'_k(t)-g'_{k-1}(t)} \\
	&\leq\norm{A_k(t)}\cdot \big(I_{1,k}(t)+I_{2,k}(t)\big)\cdot \left(E^k\right)'(t)+ \norm{A_k(t)-B_k(t)}\cdot \left(E^{k-1}\right)'(t).
	\end{aligned}
	\end{equation}
	For simplicity we split the last step up into three parts which we will estimate separately. Therefore we define
	\begin{align}
	J_{1,k}(t) &:= \norm{A_k(t)}\cdot I_{1,k}(t)\cdot \left(E^k\right)'(t) \\
	J_{2,k}(t) &:= \norm{A_k(t)}\cdot I_{2,k}(t)\cdot \left(E^k\right)'(t) \\
	J_{3,k}(t) &:= \norm{A_k(t)-B_k(t)}\cdot \left(E^{k-1}\right)'(t).
	\end{align}
	Then the upper inequality reads as
	\begin{equation}
	\normz{g'_k(t)-g'_{k-1}(t)} \leq J_{1,k}(t) + J_{2,k}(t) + J_{3,k}(t).
	\end{equation}
	\textbf{1. Estimate of $J_{1,k}(t)$.} 
	
	Using inequality \eqref{eq:deriv<alpha} we obtain
	\begin{equation}
	J_{1,k}(t) \leq \frac{ac_4\alpha^{k-1}}{E^{k+1}(t)\cdot E^k(t)}\cdot \left(E^k\right)'(t).
	\end{equation}
	Since $t \in [t_{\ul{s}}+\varepsilon,\infty) \subset (0,\infty)$ this implies
	\begin{equation}
	\lim\limits_{k\to\infty}	\frac{\left(E^k\right)'(t)}{E^{k+1}(t)} =  \lim\limits_{k\to\infty}	\frac{\prod_{j=1}^{k} \left(E^j(t)+1\right)}{E^{k+1}(t)} =0
	\end{equation}
	and hence
	\begin{equation}
	\lim\limits_{k\to\infty}	\frac{\left(E^k\right)'(t)}{E^{k+1}(t)\cdot E^{k}(t)} = 0.
	\end{equation}
	So there exists $c_9>0$ such that
	\begin{equation}
	J_{1,k}(t)\leq c_9\alpha^{k-1}.
	\end{equation}
	\textbf{2. Estimate of $J_{2,k}(t)$.} 
	
	We obtain by the submultiplicity of the operator norm, Lemma \ref{Rschlange} and inequality \eqref{eq:inverse} 
	\begin{align}
	\norm{A_k(t)}& \leq \prod_{l=1}^{k-1}\norm{D\La\left(\left(\Comp_{j=l}^{k-1}L_j\right)(\phi_k(t))\right)} 
	\leq c_4^{k-1}\cdot \left(\prod_{l=1}^{k-1}E^l(t)\right)^{-1}.
	\end{align}
	With \protect \eqref{eq:I_2,k} we have
	\begin{align}
	J_{2,k}(t) &\leq c_4^{k-1}\cdot \dfrac{\left(E^k\right)'(t)}{\prod_{l=1}^{k-1}E^l(t)}\cdot I_{2,k}(t)   \\
	&\leq c_4^{k-1}c_8\dfrac{2\normz{s_k}+1}{E^k(t)^\beta}\cdot \prod_{l=1}^{k}\left(1+\dfrac{1}{E^l(t)}\right) \\
	&= c_8\dfrac{c_4^{k-1}}{E^k(t)^{\beta /2}}\cdot \dfrac{E^k(t_k)+1}{E^k(t)^{\beta /2}} \cdot \prod_{l=1}^{k}\left(1+\dfrac{1}{E^l(t)}\right).
	\end{align}
	At this point notice that it follows from  \eqref{eq:E(t)} and \eqref{eq:t_s}, since $t>t_{\ul{s}}$, that
	\begin{equation}
	\lim\limits_{k\to\infty} \dfrac{E^k(t_k)}{E^k(t)^{\beta /2}} =0
	\end{equation}
	and
	\begin{equation}
	\prod_{l=1}^{\infty}\left(1+\dfrac{1}{E^l(t)}\right)<\infty \label{eq:prod}
	\end{equation}
	for $t>t_{\ul{s}}$. Moreover
	\begin{equation}
	\dfrac{c_4^{k-1}}{E^k(t)^{\beta /2}} \leq \alpha^{k-1}
	\end{equation}
	for large $k\in\N$. Since $t\in [t_{\ul{s}}+\varepsilon,\infty)$ there is a constant $c_{10}>0$ such that we have for large $k \in\N$ 
	\begin{equation}
	J_{2,k}(t) \leq c_{10}\alpha^{k-1}.
	\end{equation}
	\textbf{3. Estimate of $J_{3,k}(t)$.} 
	
	We want to estimate the right hand side of \protect \eqref{eq:A_k-B_k} step by step. From \eqref{eq:inverse} and \protect \eqref{eq:E^l(t)} we obtain for all $r\in\{1,\dots,k-1\}$
	\begin{equation}
	\prod_{l=1}^{r-1} \norm{X_{l,k-1}(t)} \leq c_4^{r-1} \cdot \left(\prod_{l=1}^{r-1}\normz{\left(\Comp_{j=l}^{k-1}L_j\right)(\phi_k(t))}\right)^{-1} \leq c_4^{r-1}\cdot \left(\prod_{l=1}^{r-1}E^l(t)\right)^{-1}.
	\end{equation}
	Similarly Remark \ref{doubleschlange} yields 
	\begin{equation}
	\prod_{l=r+1}^{k-1} \norm{Y_{l,k-1}(t)}  \leq c_4^{k-r-1}\cdot \left(\prod_{l=r+1}^{k-1}E^l(t)\right)^{-1}.
	\end{equation}
	Together this implies that
	\begin{equation}
	\prod_{l=1}^{r-1} \norm{X_{l,k-1}(t)} \cdot \prod_{l=r+1}^{k-1} \norm{Y_{l,k-1}(t)} \leq  c_4^{k-2}\cdot \left(\prod_{\substack{l=1 \\ l\neq r}}^{k-1}E^l(t)\right)^{-1}.
	\end{equation}
	Recall that for 
	\begin{equation}
	x = \left(\Comp_{j=r}^{k-1}L_j\right)(\phi_k(t)) \qquad \text{and} \qquad y= \left(\Comp_{j=r}^{k-1}L_j\right)\left(0,\dots,0,E^{k}(t)+M\right)
	\end{equation}
	the inequalities \eqref{eq:E^l(t)} and \protect\eqref{eq:A^-1 - B^-1} ensure that
	\begin{equation}
	\norm{D\La(x)-D\La(y)} \leq c_4^2c_7 \dfrac{\max\left\{\normz{x-y},\normz{x-y}^\beta\right\}}{E^r(t)^{1+\beta}}.
	\end{equation}
	Notice that there is a constant $c_{11}\geq 1$ such that for large $k\in\N$
	\begin{equation}
	\normz{L_{k-1}(\phi_k(t))-L_{k-1}\big(0,\dots,0,E^{k}(t)+M\big)}\leq c_{11}.
	\end{equation}
	For $r=k-1$ we obtain with \eqref{eq:E^l(t)} and \eqref{eq:A^-1 - B^-1} 
	\begin{equation}
	\begin{aligned}
	&\hspace{0.49cm}\norm{X_{k-1,k-1}(t)-Y_{k-1,k-1}(t)} \\
	&= \norm{D\La\left(L_{k-1}(\phi_k(t))\right)-D\La\left(L_{k-1}\big(0,\dots,0,E^{k}(t)+M\big)\right)} \\
	&\leq \dfrac{c_4^2c_7c_{11}}{E^{k-1}(t)^{1+\beta}}.
	\end{aligned}
	\end{equation}
	Thus
	\begin{equation}
	\begin{aligned}
	&\hspace{0.49cm}\prod_{l=1}^{k-2} \norm{X_{l,k-1}(t)} \cdot \norm{X_{k-1,k-1}(t)-Y_{k-1,k-1}(t)} \cdot \prod_{l=k}^{k-1} \norm{Y_{l,k-1}(t)}\cdot \left(E^{k-1}\right)'(t) \\
	&\leq c_7c_{11}\dfrac{c_4^k}{E^{k-1}(t)^\beta}\cdot \left(\prod_{l=1}^{k-1}E^l(t)\right)^{-1}\cdot \left(E^{k-1}\right)'(t) \\
	&= c_7c_{11}\dfrac{c_4^k}{E^{k-1}(t)^\beta}\cdot \prod_{l=1}^{k-1}\left(1+\dfrac{1}{E^l(t)}\right).
	\end{aligned}
	\end{equation}
	Since
	\begin{equation}
	\dfrac{c_4^k}{E^{k-1}(t)^\beta} \leq \dfrac{\alpha^{k-1}}{k}
	\end{equation}
	for large $k\in\N$ and $t>t_{\ul{s}}$, we obtain for $r=k-1$ from \protect \eqref{eq:prod} 
	\begin{equation}
	\begin{aligned}
	&\hspace{0.49cm}\prod_{l=1}^{k-2} \norm{X_{l,k-1}(t)}\cdot \norm{X_{k-1,k-1}(t)-Y_{k-1,k-1}(t)}\cdot \left(E^{k-1}\right)'(t) \\
	&\leq \frac{c_{12}}{k}\alpha^{k-1} \label{eq:c12/k1}
	\end{aligned}
	\end{equation}
	for a constant $c_{12}>0$ and large $k\in\N$.
	
	For $r\neq k-1$ notice that with \protect \eqref{eq:L(x)-L(y)} and \eqref{eq:c_4/min} we obtain
	\begin{equation}
	\begin{aligned}
	&\hspace{0.49cm}\displaystyle\normz{\left(\Comp_{j=r}^{k-1}L_j\right)(\phi_k(t))-\left(\Comp_{j=r}^{k-1}L_j\right)\big(0,\dots,0,E^{k}(t)+M\big)} \\ 
	&\leq \alpha^{k-r-2} \normz{L_{k-2}(L_{k-1}(\phi_k(t)))-L_{k-2}(L_{k-1}\big(0,\dots,0,E^k(t)+M)\big)} \\
	&\leq \alpha^{k-r-2}c_4\pi \dfrac{\normz{L_{k-1}(\phi_k(t))-L_{k-1}(0,\dots,0,E^k(t)+M)}}{\min\left\{ \normz{L_{k-1}(\phi_k(t))}, \normz{L_{k-1}(0,\dots,0,E^k(t)+M)}\right\}} \\
	&\leq \dfrac{c_4c_{11}\pi }{E^{k-1}(t)}.
	\end{aligned}
	\end{equation}
	Since $E^{k-1}(t)\geq E^{k-1}(t)^\beta$ for all $t\geq 0$ and large $k\in\N$, this implies
	\begin{equation}
	\begin{aligned}
	&\hspace{0.49cm}\prod_{l=1}^{r-1} \norm{X_{l,k-1}(t)}\cdot \norm{X_{r,k-1}(t)-Y_{r,k-1}(t)}\cdot \prod_{s=r+1}^{k-1} \norm{Y_{s,k-1}(t)}\cdot \left(E^{k-1}\right)'(t) \\
	&\leq c_4^{2}c_7\cdot\dfrac{1}{E^r(t)^{1+\beta}}\cdot\dfrac{c_4c_{11}\pi}{E^{k-1}(t)^\beta}\cdot c_4^{k-2} \cdot\left(\prod_{\substack{l=1 \\ l\neq r}}^{k-1}E^l(t)\right)^{-1}\cdot \left(E^{k-1}\right)'(t) \\
	&=\dfrac{c_7c_{11}\pi}{E^r(t)^{\beta}} \cdot \dfrac{c_4^{k+1}}{E^{k-1}(t)^{\beta}} \cdot \prod_{l=1}^{k-1}\left(1+\dfrac{1}{E^l(t)}\right) \\
	&\leq \dfrac{c_{12}}{k}\alpha^{k-1}. \label{eq:c12/k2}
	\end{aligned}
	\end{equation}
	Therefore we get again the estimate from above for large $k\in\N$ which finally yields with \protect \eqref{eq:c12/k1} and \protect \eqref{eq:c12/k2}
	\begin{equation}
	\begin{aligned}
	J_{3,k}(t) &= \norm{A_k(t)-B_k(t)}\cdot \left(E^{k-1}\right)'(t) \\
	&\leq\sum_{r=1}^{k-1} \frac{c_{12}}{k}\alpha^{k-1} \\
	&\leq c_{12}\alpha^{k-1}
	\end{aligned}
	\end{equation}
	for large $k \in\N$. Altogether we obtain with $c_{13}:=c_9+c_{10}+c_{12}$
	\begin{align}
	\normz{g'_k(t)-g'_{k-1}(t)} &\leq J_{1,k}(t) + J_{2,k}(t) + J_{3,k}(t) 
	\leq c_{13}\alpha^{k-1}. \label{eq:g_k'-g_{k-1}'}
	\end{align}
	Since for all $k\in\N$
	\begin{equation}
	g'_k(t) = g_0'(t) + \sum_{l=1}^{k}(g_l'(t)-g_{l-1}'(t))
	\end{equation}
	we obtain from \protect \eqref{eq:g_k'-g_{k-1}'} the uniform convergence of $(g_k')$ on 
	$[t_{\ul{s}}+\varepsilon,\infty)$. This yields for $\varepsilon\to 0$ the locally uniform convergence of $(g'_k)$ on $(t_{\ul{s}},\infty)$ which proves Theorem \ref{thm1}.
\end{proof}
\begin{remark}
	It seems plausible that this result generalizes to the case of higher derivatives using similar methods, but in a more technical way.
\end{remark}
\section{Remarks} \label{Remarks}
	In Theorem \ref{THM} we assumed that the function $h$ is $C^1$ in the interior of $Q$ and $Dh$ is H\"older continuous there. Karpi\'nska \cite{Ka99} proved her result using the fact that points in $J\setminus C$ escape to $\infty$ in a comparatively small and parabolic-like domain. To put it in a precise form in our context, Bergweiler \cite{Ber10} considered the function
	\begin{equation}
	\psi \colon [1,\infty) \to [1,\infty), \, \psi(x)=\exp\left(\sqrt{\log(x)}\right).
	\end{equation} 
	Then for all $\varepsilon>0$ we obtain
	\begin{equation}
	\lim\limits_{x\to\infty} \frac{\psi(x)}{x^\varepsilon} = \lim\limits_{x\to\infty} \exp\left(-\varepsilon \log(x)+\sqrt{\log(x)}\right) = 0. 
	\end{equation}
	The parabolic-like domain from above than has the form
	\begin{equation}
	\Omega = \{x\in\R^d : x_d>M \text{ and } \normz{\tilde{x}}< \psi(x_d)^2\},
	\end{equation}
	where $x=(\tilde{x},x_d)$.
	In \cite{Ber10} it was shown that if $x\in J\setminus C$, then $f^k(x) \in \Omega$ for large $k\in\N$. Using this information it is enough to require that $h$ satisfies the conditions of Theorem \ref{THM} on a neighborhood of the point $v$ from \protect \eqref{eq:v} to obtain differentiability of the `tails` of the hairs \cite[\S3]{SZ03}. Moreover it can be shown that if $h$ is $C^1$ on a suitable compact subset of $\operatorname{int}(Q)$, we obtain the differentiability of the hairs except for the endpoints. This shows that one can relax the condition on $Dh$ in such a way that $Dh$ needs to be only locally H\"older continuous.


\begin{thebibliography}{McM87}

\bibitem[Ba07]{Ba07} K. Bara\'nski, Trees and hairs for some hyperbolic entire maps of finite order, Math. Z. 257 (2007), no. 1, 33-59.
\bibitem[Bea91]{Bea91} A. F. Beardon, Iteration of Rational Functions: Complex Analytic Dynamical Systems, Grad. Texts in Math. 132, Springer, New York (1991)
\bibitem[Ber93]{Ber93} W. Bergweiler, Iteration of meromorphic functions, Bull. Amer. Math. Soc. (N. S.) 29 (1993), 151-188
\bibitem[Ber10]{Ber10} W. Bergweiler, Karpi\'nska's paradox in dimension three, Duke Math. J. 154 (2010), 599-630
\bibitem[DK84]{DK84} R. L. Devaney, M. Krych, Dynamics of exp(z), Ergodic Theory \& Dynam. Systems 4 (1984), 35-52
\bibitem[DGH86]{DGH86} R. L. Devaney, L. R. Goldberg, J. H. Hubbard, A dynamical approximation to the exponential map by polynomials, Preprint, MSRI Berkeley (1986)
\bibitem[DT86]{DT86} R. L. Devaney, F. Tangerman, Dynamics of entire functions near the essential singularity, Ergodic Theory Dynam. Systems 6 (1986), no. 4, 489-503
\bibitem[IM01]{IM01} T. Iwaniec and G. Martin, Geometric Function Theory an Non-Linear Analysis, Oxford Math. Monogr., Oxford Univ. Press, New York (2001)
\bibitem[Ka99]{Ka99} B. Karpi\'nska, Hausdorff dimension of the hairs without endpoints for $\lambda\exp z$, C.R. Acad. Sci. Paris S\'{e}r. I Math. 328 (1999), 1039-1044
\bibitem[McM87]{McM87} C. McMullen, Area and Hausdorff dimension of Julia sets of entire functions, Trans. Amer. Math. Soc. 300 (1987), 329-342
\bibitem[Mi06]{Mi06} J. Milnor, Dynamics in One Complex Variable, 3rd ed., Ann. of Math. Stud. 160, Princeton Univ. Press, Princeton (2006)
\bibitem[NS17]{NS17} D. Nicks and D. J. Sixsmith, Periodic domains of quasiregular maps, Ergodic Theory \& Dynam. Systems (2017), doi:10.1017/etds.2016.116
\bibitem[Re03]{Re03} L. Rempe, Dynamics of exponential maps, doctoral thesis, Christian-Albrechts-Universi\"at Kiel (2003)
\bibitem[Re06]{Re06} L. Rempe, Topological dynamics of exponential maps on their escaping sets, Ergodic Theory \& Dynam. Systems 26 (2006), 1939-1975
\bibitem[Ri93]{Ri93} S. Rickman, Quasiregular mappings, volume 26 of Ergebnisse der Mathematik und ihrer Grenzgebiete (3) [Results in Mathematics and Related Areas (3)], Springer-Verlag, Berlin (1993)
\bibitem[RRRS11]{RRRS11} G. Rottenfusser, J. R\"uckert, L. Rempe and D. Schleicher, Dynamic rays of entire functions, Ann. of Math. (2) 173 (2011), no. 1, 77-125
\bibitem[Sch03]{Sch03} D. Schleicher, Attracting dynamics of exponential maps, Ann. Acad. Sci. Fenn. Math. 28 (2003), 3-34.
\bibitem[St93]{St93} N. Steinmetz, Rational Iteration: Complex Analytic Dynamical Systems, de Gruyter Stud. Math. A 16, de Gruyter, Berlin (1993)
\bibitem[SZ03]{SZ03} D. Schleicher and J. Zimmer, Escaping points of exponential maps, J. London Math. Soc. (2) 67 (2003), no. 2, 380-400
\bibitem[Vi88]{Vi88} M. Viana da Silva, The differentiability of the hairs of exp(Z), Procs. Amer. Math. Soc. 103 No. 4 (1988), 1179-1184
\bibitem[Zo67]{Zo67} V. A. Zorich, A theorem of M. A. Lavrent'ev on quasiconformal space maps (in Russian), Mat. Sb. (N.S.) 74 (116) no. 3 (1967), 417-433; English translation in Math. USSR Sb. 3, no. 3 (1967), 389-403

\end{thebibliography}
\end{document}